\Crefname{secinapp}{Appendix}{Appendices}
\crefname{enumi}{Procedure}{parts}
\theoremstyle{plain}
\newtheorem{theorem}{Theorem}[section]
\newtheorem{proposition}[theorem]{Proposition}
\newtheorem{lemma}[theorem]{Lemma}
\numberwithin{equation}{section}
\theoremstyle{definition}
\newtheorem{definition}[theorem]{Definition}
\newtheorem{remark}[theorem]{Remark}
\newtheorem{example}[theorem]{Example}
\newtheorem{algorithm}[theorem]{Algorithm}
\newcommand{\cP}{\mathcal{P}}
\newcommand{\cR}{\mathcal{R}}
\newcommand{\cM}{\mathcal{M}}
\newcommand{\cT}{\mathcal{T}}
\newcommand{\cO}{\mathcal{O}}
\newcommand{\cV}{\mathcal{V}}
\newcommand{\cE}{\mathcal{E}}
\newcommand{\cS}{\mathcal{S}}
\newcommand{\Z}{\mathbb{Z}}
\newcommand{\R}{\mathbb{R}}
\newcommand{\fS}{\mathfrak{S}}
\DeclareMathOperator{\CM}{CM}
\DeclareMathOperator{\DCM}{\overline{CM}}
\DeclareMathOperator{\Pic}{Pic}
\DeclareMathOperator{\im}{im }
\DeclareMathOperator{\GL}{GL}
\DeclareMathOperator{\St}{St}
\DeclareMathOperator{\Lk}{Lk}
\DeclareMathOperator{\wed}{wed}
\DeclareMathOperator{\Proj}{Proj}
\DeclareMathOperator{\V}{Vert}
\DeclareMathOperator{\co}{co}
\DeclareMathOperator{\len}{len}
\title[]{An algorithmic strategy for finding characteristic maps over wedged simplicial complexes}
\author{Suyoung Choi}
\address{Department of mathematics, Ajou University, 206, World cup-ro, Yeongtong-gu, Suwon 16499, Republic of Korea}
\email{schoi@ajou.ac.kr}
\author{Mathieu Vall\'ee}
\address{Univ Rennes, CNRS, IRMAR - UMR 6625, F-35000 Rennes, France}
\email{mathieu.vallee@ens-rennes.fr}
\date{\today}
\subjclass[2020]{57S12, 57-08, 05E45}
\keywords{toric topology, real toric manifold, wedge operation, puzzle method, characteristic map}
\thanks{This project was supported by the National Research Foundation of Korea Grant funded by the Korean Government (NRF-2019R1A2C2010989).}
\begin{document}
\begin{abstract}
The puzzle method was introduced by Choi and Park as an effective method for finding non-singular characteristic maps over wedged simplicial complexes $K(J)$ obtained from a given simplicial complex $K$. We study further the mod $2$ case of the puzzle method. We firstly describe it completely in terms of linear algebraic language which allows us to develop a constructive puzzle algorithm. We also analyze our algorithm and compare its performances with other known algorithms including the Garrison and Scott algorithm.
\end{abstract}
\maketitle

\section{Introduction}
Let $K$ be a pure simplicial complex of dimension $n-1$ on the vertex set $V$, where $n$ is a positive integer.
A map $\lambda \colon V \to \Z_2^n$ is called a (mod $2$) \emph{characteristic map} over $K$ if it satisfies the following \emph{non-singularity condition}:
if $\{i_1, \ldots, i_n\} \in K$, then $\{\lambda(i_1), \ldots, \lambda(i_n)\}$ is linearly independent.
Two characteristic maps over $K$ are said to be \emph{DJ-equivalent} if one can be obtained from the other by changing basis of $\Z_2^n$.
Denote by $\CM(K)$ the DJ-equivalence classes of characteristic maps over $K$.
It is known that $\CM(K)$ provides the classification of some important classes of manifold.
For instance, if $K$ is polytopal, $\CM(K)$ is the classification of small covers introduced in \cite{Davis-Januszkiewicz1991} whose orbit space is combinatorially isomorphic to $K$.

One important question in toric topology is ``how to find $\CM(K)$ for a given $K$?".
There have been many attempts to list up all mod $2$ characteristic maps over some specific (polytopal) PL spheres such as a dodecahedron
\cite{Garrison-Scott2003}, prisms \cite{Cai-Cehn-Lu2007}, cubes \cite{Choi2008}, and polytopes with a few vertices \cite{Erokhovets2011}, \cite{Choi-Park2016}.

For this purpose, one remarkable algorithm was introduced by Garrison and Scott \cite{Garrison-Scott2003}.
This algorithm produces $\CM(K)$ for any pure simplicial complex $K$.
However, since it is a branch-and-bound algorithm, its complexity becomes relatively bad when increasing the dimension $n-1$ or the number of vertices $m$, and, hence, it can be used for neither high dimensional cases nor for equipping many vertices.

Another remarkable algorithm was provided after by Choi and Park (\cite{Choi-Park2016}, \cite{CP_wedge_2}).
In order to present their method, we need to introduce some notations.
The \emph{Picard number} $\Pic(K)$ of $K$ is defined as $\Pic(K)=m-n$, where $m = |V|$.
For $v \in V$, the \emph{wedge} of $K$ at $v$ is the simplicial complex on $V\cup\{v_1,v_2\} \setminus  \{v\}$ defined by
	\begin{equation}\label{Wedge_def}
	   \wed_v(K) := (I \ast \Lk_K(v))\cup (\partial I \ast K \setminus \{v\}),
	\end{equation}
	where $I$ is the $1$-simplex with vertices $\{v_1,v_2\}$, $K \setminus F:= \{\sigma \in K \mid F \not\subset \sigma\}$, for a face $F\in K$, and $\Lk_K(v)$ is the link of $K$ at the vertex $v$.
Set $V = [m] := \{1,\ldots,m\}$. More generally, for an $m$-tuple $J=(j_1, \ldots, j_m)$ of positive integers, denote by $K(J)$ the \emph{wedged simplicial complex} obtained from $K$ after performing $j_v-1$ wedges on each vertex $v \in V$.
It should be noted that a wedge operation preserves the Picard number.
If $K$ is not obtained from a lower dimensional simplicial complex by a wedge operation, then $K$ is called a \emph{seed}.
Any simplicial complex $K$ which is not a seed can always be represented as a wedged simplicial complex $L(J)$ with $L$ being a seed.

Choi and Park's method relies on obtaining $\CM(\wed_v (K))$ from $\CM(K)$.
This process can be interpreted as putting stones on a certain board following specific rules, accordingly this method is called \emph{the puzzle method}.
A puzzle is said to be \emph{realizable} if all stones of the board respect the rules.
For a seed $L$, we firstly prepare $\CM(L)$. Its low dimension makes the Garrison and Scott algorithm usable.
For a wedged simplicial complex $K = L(J)$ and $J=(j_1, \ldots, j_m)$, one can construct $\CM(K)$ from $\CM(L)$ by applying a puzzle method inductively on $j_1 + \cdots + j_m$.
In many ways this method is quite powerful since it can handle plenty of interesting cases such as wedges of polygons \cite{Choi-Park2019}.

Considering an algorithmic viewpoint, the puzzle method would accelerate the process for constructing $\CM(L(J))$ for $L$ a seed and would be faster than computing it directly by the Garrison and Scott algorithm. In addition, it was shown in \cite{CP_wedge_2} that, for a fixed Picard number $p$, the number of seeds $L$ with $\Pic(L)=p$ satisfying $\CM(L) \neq \emptyset$ is finite.
More precisely, $n \leq 2^p-p$.
In other words, when $p$ is small and the dimension is high, then a PL sphere $K$ is either a wedged PL sphere or it does not support any mod $2$ characteristic map.
Thus, for a small Picard number, an algorithm based on the puzzle method would be a fine upgrade of the Garrison and Scott algorithm.
As a by-product, if one can find all seeds of fixed Picard number $p$ that support a mod $2$ characteristic map, an algorithm based on the puzzle method enables us to classify some important classes of closed manifolds including small covers and real toric manifolds whose first $\Z_2$-cohomology groups are of rank $p$.

A straightforward algorithm based on the puzzle method would be to enumerate all possible stone configuration on the board and check if the rules introduced in \cite{CP_wedge_2} are respected in each such configuration.

However, the latter rules are somehow complicated and require a lot of preliminary computation. The first rule demands all edges of the board to correspond to a mod $2$ characteristic map over $\wed_v(L)$, and this requires to compute $\CM(\wed_v(L))$ for each vertex $v$ of $L$, i.e. $m$ times. The second one is that every subsquares of the board should correspond to a mod $2$ characteristic map over a simplicial complex obtained from $L$ after two consecutive wedge operations, namely $\wed_{v,w}(L)$, and this requires to compute $\CM(\wed_{v,w}(L))$ for each unordered pair of distinct vertices $(v,w)$ of $L$, i.e. $m(m-1)/2$ times.

Both of these rule computations would be time consuming in any algorithm based on the puzzle method. Moreover, our intuition makes us believe that we could try to fill the board with stones in a constructive way by respecting the rules step by step. Consequently, until now, this method seemed promising but have not been analyzed because of the complexity of these rules.

In this paper, we zero in on finding a strategy for the puzzle method to be usable as an algorithm.
We describe the puzzle method completely in terms of linear algebraic language. It provides explicit formulae for all possible mod $2$ characteristic maps over a wedged seed and over a twice wedged seed. These formulae only require the computation of $\CM(L)$ for the seed $L$ and thus make the previously mentioned time consuming computations unnecessary.

Furthermore, we focus on the key concept that a realizable puzzle only requires a few stones positioned on the board to have all of its other stone positions determined \cite{CP_wedge_2}. From this we transform the greedy ``enumerating all cases'' puzzle algorithm onto an elegant constructive and procedural algorithm which uses the edge and square rules previously computed for building a realizable puzzle.

We compute its complexity, and show that it is much faster than a direct use of the Garrison and Scott algorithm on $L(J)$ when we tend to increase the dimension $n-1$.

\section{Dual characteristic maps}
Let $K$ be a PL sphere on $[m]=\{1, \ldots, m\}$ with $\dim(K) = n-1$. A non-singular characteristic map $\lambda$ over $K$ can be associated to an element of a matrix group $\lambda = \left(\begin{array}{c|c|c|c}
\lambda(1) & \lambda(2) & \cdots & \lambda(m)
\end{array}\right)\in \cM(n,m,\Z_2)$, where $\cM(n,m,\Z_2)$ is the set of $n\times m$ $\Z_2$-matrices.
The order in which the columns appear does not matter since we can relabel the vertices of $K$. The only important data is the combinatorial data stored by $\lambda$, namely the fact it satisfies the non-singularity condition. Note that the map $\lambda$ can also be interpreted as a \emph{coloring} of the vertices with elements of $\Z_2^n$ respecting a coloring rule which is the non-singularity condition. Insofar as, vertices being part of the same face must have linearly independent colors.

Now, we denote the set of the characteristic maps over $K$ by $\Lambda(K)$ and consider the $\GL(n,\Z_2)$-action of left multiplication on $\Lambda(K)$, where $\GL(n, \Z_2)$ is the general linear group of degree $n$ over $\Z_2$.
One can check that $g \cdot \lambda$ for any $g\in \GL(n,\Z_2)$ still satisfies the non-singularity condition for $K$.
Hence, $\GL(n,\Z_2)$ acts on $\Lambda(K)$ and we will call an orbit of $\GL(n,\Z_2) \curvearrowright \Lambda(K)$ a \emph{DJ class} (of characteristic maps) over $K$.
We denote by $\CM(K) = \GL(n,\Z_2)\setminus \Lambda(K)$.

Since the labels of the vertices of $K$ can be isomorphically modified with an element of a permutation group $\fS_{[m]}$ we can always consider that the face $\{1,\ldots,n\}$ is a maximal face of $K$.
Thus by processing the Gaussian elimination algorithm on an element $\lambda\in\CM(K)$, we can obtain a representative of the DJ class being $
    \lambda = \left(\begin{array}{c|c}
        I_n & M
    \end{array}\right)
\in \cM(n,m,\Z_2)$, where $I_n$ is the identity matrix of size $n$, and $M$ is an $n \times (m-n)$ matrix.

We now construct a new object which will store the same combinatorial data than $\lambda$ as follows. We see $\lambda\in\CM(K)$ as a linear map $\lambda \colon \Z_2^m \to \Z_2^n$ using its matrix representation.
Since $\lambda$ has rank $n$, by the rank theorem, $\dim(\ker(\lambda)) = m-n$.
We take a basis $\left\{v_1,\ldots,v_{m-n} \right\}$ of $\ker(\lambda)$.
Let us define $\lambda^\star := \left(\begin{array}{c|c|c} v_1 & \cdots & v_{m-n}\end{array}\right)\in \cM(m,m-n,\Z_2)$.
We have $\lambda\lambda^\star = 0$.
The map $\lambda^\star$ is well-defined since $(g \cdot \lambda)\lambda^{\star} = g \cdot(\lambda\lambda^\star) = 0$ for all $g \in \GL(n,\Z_2)$.
We denote by $\bar{\lambda}(1),\ldots,\bar{\lambda}(m)$ the rows of $\lambda^\star$.
Then, the corresponding map $\bar{\lambda} \colon [m]\to\Z_2^{m-n}, i \mapsto \bar{\lambda}(i)^t$ is called the \emph{dual characteristic map}  associated to $\lambda$.

Furthermore, one can see that $\lambda^\ast$ is independent from the choice of a basis for the kernel. For all $h\in \GL(m-n, \Z_2)$, $\lambda(\lambda^\star \cdot h) = (\lambda\lambda^\star) \cdot h = 0$.
We will then call DJ class of the dual characteristic map $\bar{\lambda}$ the orbit $\DCM(K):=\{\lambda^{\star} \cdot h \colon h\in \GL(m-n, \Z_2)\}$.

\begin{remark}
    There is a one-to-one correspondence between $\CM(K)$ and $\DCM(K)$.
    One can notice that if we choose a representative $\lambda = \left(\begin{array}{c|c} I_n & M \end{array}\right)$ (the matrix $M$ is unique for each DJ class) of a DJ class, then we can map this representative to $\bar{\lambda}^t = \left(\begin{array}{c}M \\ \hline I_{m-n}\end{array}\right)$ and we see that $\lambda \bar{\lambda}^t = \left(\begin{array}{c}M+M\end{array}\right) = 0$.
Conversely, from a dual DJ class $\bar{\lambda}$ we can find a representative of the form $\bar{\lambda}^t = \left(\begin{array}{c}M \\ \hline I_{m-n}\end{array}\right)$ and $M$ is unique for each class.
\end{remark}

In addition to this correspondence, it was observed in \cite[Corollary~7.33]{BP2002} that each DJ class $\lambda$ and its associated dual $\bar{\lambda}$ share the same combinatorial data and this shows up as a \emph{dual non-singularity condition} for the dual DJ classes.

\begin{proposition}
Let $K$ be a PL sphere on $[m]$ of dimension $n-1$, $\lambda$ a characteristic map over $K$, and $\bar{\lambda}$ its dual.
Let $S$ be a subset of $[m]$.
The following are equivalent:
\begin{enumerate}
\item $\bar{\lambda}(S^c)$ is a basis of $\Z_2^{m-n}$ (dual non-singularity condition);
\item $\lambda(S)$ is a basis of $\Z_2^{n}$ (non-singularity condition).
\end{enumerate}
\end{proposition}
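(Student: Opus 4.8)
The plan is to prove both implications at once by a short piece of linear algebra, reducing everything to the triviality or non-triviality of a single intersection of subspaces of $\Z_2^m$. First I would dispose of the cardinality issue: condition (1) forces $|S^c| = m-n$ and condition (2) forces $|S| = n$, and these two requirements are equivalent, so if $|S| \neq n$ then both (1) and (2) fail and the equivalence is vacuous. Hence from now on I may assume $|S| = n$ and $|S^c| = m-n$. Next I would record the structural facts already available: viewing $\lambda \colon \Z_2^m \to \Z_2^n$ and $\lambda^\star \colon \Z_2^{m-n} \to \Z_2^m$ as linear maps, $\lambda$ is surjective of rank $n$, $\lambda^\star$ is injective of rank $m-n$ (its columns form a basis of $\ker\lambda$), and therefore $\operatorname{im}\lambda^\star = \ker\lambda$. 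Introduce the subspace $E_S := \operatorname{span}\{e_i : i \in S\} \subseteq \Z_2^m$, of dimension $n$, and the coordinate projection $\pi \colon \Z_2^m \to \Z_2^{m-n}$ forgetting the entries indexed by $S$, so that $\ker\pi = E_S$.

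Then I would translate condition (2). The vectors $\lambda(S)$ are exactly the images under $\lambda$ of the standard basis vectors spanning $E_S$; since there are $n = \dim \Z_2^n$ of them, they form a basis of $\Z_2^n$ if and only if $\lambda|_{E_S}$ is injective, i.e. if and only if $E_S \cap \ker\lambda = 0$.

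Next I would translate condition (1). The vectors $\bar{\lambda}(S^c)$ are precisely the columns of the composite $\pi \circ \lambda^\star \colon \Z_2^{m-n} \to \Z_2^{m-n}$ (restricting a row of $\lambda^\star$ to the coordinates in $S^c$ is applying $\pi$). Since there are $m-n = \dim \Z_2^{m-n}$ of them, they form a basis if and only if $\pi \circ \lambda^\star$ is injective, i.e. if and only if $\ker(\pi \circ \lambda^\star) = 0$; and because $\lambda^\star$ is injective, this is equivalent to $\operatorname{im}\lambda^\star \cap \ker\pi = 0$, that is, to $\ker\lambda \cap E_S = 0$. Since (1) and (2) have now both been shown to be equivalent to the single condition $\ker\lambda \cap E_S = 0$, the proof is complete.

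I expect no genuine obstacle here; the only points needing care are the bookkeeping in the cardinality reduction and the repeated use of the fact that an endomorphism of a finite-dimensional vector space is injective precisely when it is bijective, which is what lets one pass freely between ``is a basis'', ``is linearly independent'', and ``spans''. It is worth noting that the simplicial structure of $K$ is never used: only $\lambda\lambda^\star = 0$ together with the rank conditions enters. As an alternative route, after relabelling so that $S = \{1,\dots,n\}$ one may bring $\lambda$ to the reduced form $\left(\begin{array}{c|c} I_n & M \end{array}\right)$ --- which is possible exactly when $\lambda(S)$ is a basis --- and then invoke the preceding Remark, which gives $\lambda^\star = \left(\begin{array}{c} M \\ \hline I_{m-n} \end{array}\right)$, whose rows indexed by $S^c$ form $I_{m-n}$, manifestly a basis; running this argument backwards settles the converse.
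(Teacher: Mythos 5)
Your proof is correct, and it takes a genuinely different (and arguably cleaner) route than the paper's. The paper works with the block identity $\lambda_{|S}\lambda^\star_{|S} = \lambda_{|S^c}\lambda^\star_{|S^c}$ extracted from $\lambda\lambda^\star=0$, uses the full rank of $\lambda^\star_{|S^c}$ to get the inclusion $\im \lambda_{|S} \supseteq \im \lambda_{|S^c}$, and concludes that $\lambda_{|S}$ has rank $n$; the converse implication is then only asserted to be ``very similar'' via co-images. You instead reduce \emph{both} conditions to the single statement $E_S \cap \ker\lambda = 0$, where $E_S = \operatorname{span}\{e_i : i \in S\}$, using only that $\lambda^\star$ is injective with $\im\lambda^\star = \ker\lambda$. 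What this buys is a fully symmetric argument in which the two implications fall out of one computation, with no ``similarly'' left to the reader, and it also makes explicit the cardinality bookkeeping ($|S|=n$) that the paper leaves implicit. One small slip worth fixing: the vectors $\bar\lambda(i)$, $i\in S^c$, are the \emph{rows} of the square matrix $\pi\circ\lambda^\star$, not its columns. This is harmless for your argument, since a square matrix over a field is invertible if and only if its rows form a basis, if and only if its columns do, and invertibility of $\pi\circ\lambda^\star$ is all you actually use.
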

\begin{proof}
Let us suppose that $\bar{\lambda}(S^c)$ is a basis of $\Z_2^{m-n}$. We write $\lambda = \left(\begin{array}{c|c} \lambda_{|S} & \lambda_{|S^c}\end{array}\right)$ and $\lambda^\star = \left(\begin{array}{c}
\lambda^\star_{|S} \\ \hline  \lambda^\star_{|S^c}\end{array}\right)$ which are block matrices representing $\lambda$ and $\bar{\lambda}$ respectively, we have
$$\lambda \lambda^\star = \lambda_{|S}\lambda^\star_{|S} + \lambda_{|S^c}\lambda^\star_{|S^c} = 0,$$
thus
$$\im \lambda_{|S} \supset \im \lambda_{|S}\lambda^\star_{|S} = \im \lambda_{|S^c}\underset{\text{is of full rank}}{\underbrace{\lambda^\star_{|S^c}}} = \im \lambda_{|S^c},$$
and then
$$\im \lambda = \im \lambda_{|S},$$
because we would have a problem with the rank of $\lambda$ otherwise.
Hence, the rank of $\lambda_{|S}$ is $n$.
The converse is proved very similarly by using the co-images instead of the images.
\end{proof}

\begin{remark}One can notice that the proof only requires $K$ to be a simplicial complex. However for the dual characteristic map to be interesting and usable, we will need at least $K$ to be pure, and in our case of study, to be a PL sphere.
\end{remark}

Now we know that both a characteristic map and its dual over $K$ store the same combinatorial data. In addition, there is a one-to-one relation between each DJ class and its dual.
From this, in all that follows we will not hesitate to mistake the notion of characteristic map with the one of dual characteristic map and use one or another depending on which one is the most convenient for the context.

\section{The puzzle method for finding characteristic maps over wedged seeds}\label{PuzzleCP}

The goal of this section is to introduce the puzzle method for finding $\CM(K)$, with $K$ a PL sphere, based on \cite{CP_wedge_2}.
It will be necessary for the next section which will describe a drastic improvement for the mod $2$ characteristic maps case.
It should be noted that the original method of Choi and Park is applicable to not only mod $2$ characteristic maps but also its entire version $\lambda \colon V \to \Z^n$.
However, throughout this paper, we will only deal with the case of mod $2$ characteristic maps.

The following definition is an inductive generalization of the wedge operation defined in the introduction.

\begin{definition}[\cite{BBCG2015}]
    Let $K$ be a simplicial complex on $[m]$, and $J = (j_1,\ldots,j_m) \in (\Z_{>0})^m$.
    We define the wedged simplicial complex $K(J)$ as the simplicial complex obtained after performing $j_v-1$ wedges on each vertex $v\in[m]$.
    We define $\len(J):= \left(\sum_{k=1}^m j_k\right) -m$ to be the total number of wedges performed with $J$.
\end{definition}
\begin{remark}Informally it is the simplicial complex
    $$K(J) = \underbrace{\wed_1\ldots\wed_1}_{j_1 \text{ times}}\ldots\underbrace{\wed_v\ldots\wed_v}_{j_v \text{ times}}\ldots\underbrace{\wed_m\ldots\wed_m}_{j_m \text{ times}}(L),$$
    but writing it in this way is not convenient and requires some work on the notation.
    
    Thus, this definition provides a compact notation $K=L(J)$, with $L$ being \emph{the seed of $K$} (unique up to isomorphism) and $J$ an $m$ tuple of positive integer representing the wedge operations performed on $L$ to get $K$. Notice that in this case, the $m$-tuple $J$ is not unique, in fact the symmetries of $L$ can lead to $L(J)=L(J^\prime)$ with $J\neq J^\prime$. In addition, one can see that when we write $L(J)$, $L$ does not necessarily need to be a seed. However in all that follows, we will always use this notation when $L$ is the seed of some simplicial complex $K$.
\end{remark}

We can now rephrase our goal. If $L$ is a seed PL sphere on $[m]$ of dimension $n-1$ and $J = (j_1,\ldots,j_m)$ is an $m$-tuple of positive integers, we want to find an algorithm for constructing all  DJ classes over the wedged PL sphere $K=L(J)$.

The operation which is converse to the wedge operation is the \emph{link} operation.

\begin{definition}[Link of a face]
Let $K$ be a simplicial complex on $[m]$ and $\sigma\in K$ a face of $K$. Then the Link of $\sigma$ in $K$ is:
$$\Lk_K(\sigma) := \{\tau\in K \mid \sigma\cup \tau \in K,\tau\cap \sigma = \emptyset \}.$$
\end{definition}

It should be noted that if $K$ is a PL sphere, then the link $\Lk_K(\sigma)$ of any face $\sigma$ of $K$ is also a PL sphere.
Furthermore, if $K$ supports a mod $2$ characteristic map, so do its links.

\begin{definition} \cite{Choi-Park2016}
Let $\lambda$ be a mod $2$ characteristic map over $K$ and $\sigma\in K$ a face of $K$.
The \emph{projection of $\lambda$ with respect to $\sigma$} is
$$(\Proj_\sigma\lambda)(s) = [\lambda(s)] \in \Z_2^n/\langle \lambda(v),v\in\sigma\rangle \simeq \Z_2^{n-|\sigma|},$$
for $s$ a vertex of $\Lk_K(\sigma)$.
Then, $\Proj_\sigma\lambda$ is well-defined and is in $\CM(\Lk_K(\sigma))$.
When $\sigma = \{p\}$ is a vertex, we will simply write $\Proj_\sigma\lambda = \Proj_p\lambda$.
When $\sigma = \{p, q\}$ is a pair of distinct vertices, we also simply write $\Proj_\sigma\lambda = \Proj_{p,q}\lambda$.
\end{definition}

\begin{example}\label{ex_pentagon}Let us illustrate what we meant by ``the operation converse to the wedge is the link operation". Let $\cP_5$ be the pentagon on $\{1,\ldots,5\}$. The wedge of $\cP_5$ at $1$ is illustrated as below.
\begin{center}
\begin{tabular}{cc}
\begin{tikzpicture}[scale=3]
\node[circle,fill=black,inner sep=0pt,minimum size=3pt,label=left:{$1$}] at (0,0) {};
\node[circle,fill=black,inner sep=0pt,minimum size=3pt,label=right:{$2$}] at (0.8,0) {};
\node[circle,fill=black,inner sep=0pt,minimum size=3pt,label=right:{$3$}] at (1.0,0.3) {};
\node[circle,fill=black,inner sep=0pt,minimum size=3pt,label=above:{$4$}] at (0.45,0.45) {};
\node[circle,fill=black,inner sep=0pt,minimum size=3pt,label=left:{$5$}] at (-0.1,0.25) {};
\draw (0,0) -- (0.8,0) -- (1.0,0.3) -- (0.45,0.45) -- (-0.1,0.25) -- cycle ;
\end{tikzpicture}
&
\begin{tikzpicture}[scale=3]
\draw (-0.1,0.25) -- (0.45,0.45);
\draw (0,0) -- (1.0,0.3) ;
\draw (0,0) -- (0.45,0.45);
\fill[fill = black!20!white,opacity = 0.4] (0,0) -- (0.8,0) -- (1.0,0.3) -- cycle;
\fill[fill = black!20!white,opacity = 0.4] (0,0) -- (1.0,0.3) -- (0.45,0.45) -- cycle;
\fill[fill = black!20!white,opacity = 0.4] (0,0) --  (0.45,0.45) -- (-0.1,0.25) -- cycle;
\fill[fill = black!20!white,opacity = 0.4] (0,0.5) -- (0.8,0) -- (1.0,0.3) -- cycle;
\fill[fill = black!20!white,opacity = 0.4] (0,0.5) -- (1.0,0.3) -- (0.45,0.45) -- cycle;
\fill[fill = black!20!white,opacity = 0.4] (0,0.5) -- (0.45,0.45) -- (-0.1,0.25) -- cycle;
\fill[fill = black!20!white,opacity = 0.4] (0,0.5) -- (0.8,0) -- (0,0) -- cycle;
\fill[fill = black!20!white,opacity = 0.4] (0,0.5) -- (0,0) -- (-0.1,0.25) -- cycle;
\draw (0,0) -- (0,0.5);
\draw (0,0.5) -- (0.8,0);
\draw (0,0.5) -- (-0.1,0.25);
\draw (0,0.5) -- (1.0,0.3);
\draw (0,0.5) -- (0.45,0.45);
\draw (-0.1,0.25) -- (0,0) -- (0.8,0) -- (1.0,0.3) -- (0.45,0.45);
\node[circle,fill=black,inner sep=0pt,minimum size=3pt,label=left:{$1_1$}] at (0,0) {};
\node[circle,fill=black,inner sep=0pt,minimum size=3pt,label=left:{$1_2$}] at (0,0.5) {};
\node[circle,fill=black,inner sep=0pt,minimum size=3pt,label=right:{$2$}] at (0.8,0) {};
\node[circle,fill=black,inner sep=0pt,minimum size=3pt,label=right:{$3$}] at (1.0,0.3) {};
\node[circle,fill=black,inner sep=0pt,minimum size=3pt,label=above:{$4$}] at (0.45,0.45) {};
\node[circle,fill=black,inner sep=0pt,minimum size=3pt,label=left:{$5$}] at (-0.1,0.25) {};
\end{tikzpicture}
\\
The pentagon $\cP_5$ & $\wed_1(\cP_5)$
\end{tabular}
\end{center}

The link of $\wed_1(\cP_5)$ at vertices $1_1$ and $1_2$ are two copies of $\cP_5$ as follows.
\begin{center}
\begin{tabular}{cc}
\begin{tikzpicture}[scale=3]
\node[circle,fill=black,inner sep=0pt,minimum size=3pt,label=left:{$1_1$}] at (0,0) {};
\node[circle,fill=black,inner sep=0pt,minimum size=3pt,label=right:{$2$}] at (0.8,0) {};
\node[circle,fill=black,inner sep=0pt,minimum size=3pt,label=right:{$3$}] at (1.0,0.3) {};
\node[circle,fill=black,inner sep=0pt,minimum size=3pt,label=above:{$4$}] at (0.45,0.45) {};
\node[circle,fill=black,inner sep=0pt,minimum size=3pt,label=left:{$5$}] at (-0.1,0.25) {};
\draw (0,0) -- (0.8,0) -- (1.0,0.3) -- (0.45,0.45) -- (-0.1,0.25) -- cycle ;
\end{tikzpicture}
&
\begin{tikzpicture}[scale=3]

\draw (-0.1,0.25) -- (0,0.5) -- (0.8,0) -- (1.0,0.3) -- (0.45,0.45) -- cycle;
\node[circle,fill=black,inner sep=0pt,minimum size=3pt,label=left:{$1_2$}] at (0,0.5) {};
\node[circle,fill=black,inner sep=0pt,minimum size=3pt,label=right:{$2$}] at (0.8,0) {};
\node[circle,fill=black,inner sep=0pt,minimum size=3pt,label=right:{$3$}] at (1.0,0.3) {};
\node[circle,fill=black,inner sep=0pt,minimum size=3pt,label=above:{$4$}] at (0.45,0.45) {};
\node[circle,fill=black,inner sep=0pt,minimum size=3pt,label=left:{$5$}] at (-0.1,0.25) {};
\end{tikzpicture}
\\
$\Lk_{\wed_1(\cP_5)}(1_2)\simeq \cP_5$ & $\Lk_{\wed_1(\cP_5)}(1_1)\simeq \cP_5$
\end{tabular}
\end{center}
Let $\Lambda=\begin{blockarray}{cccccc}
1_1 & 1_2 & 2 & 3 & 4 & 5\\
\begin{block}{(ccc|ccc)}
1 & 0 & 0 & 1 & 0 & 1\\
0 & 1 & 0 & 1 & 1 & 0\\
0 & 0 & 1 & 0 & 1 & 1\\
\end{block}\end{blockarray}\in\CM(\wed_1(\cP_5))$. The two projections of $\Lambda$ with respect to the vertices $1_1$ and $1_2$ are
$$\Proj_{1_1}\Lambda=\begin{blockarray}{ccccc}
 1_2 & 2 & 3 & 4 & 5\\
\begin{block}{(cc|ccc)}
1 & 0 & 1 & 1 & 0\\
0 & 1 & 0 & 1 & 1\\
\end{block}\end{blockarray}\text{  and }\Proj_{1_2}\Lambda=\begin{blockarray}{ccccc}
1_1 & 2 & 3 & 4 & 5\\
\begin{block}{(cc|ccc)}
1 & 0 & 1 & 0 & 1\\
0 & 1 & 0 & 1 & 1\\
\end{block}\end{blockarray},$$
and they both are characteristic maps over $\cP_5$.
\end{example}
We observe that from a pair $(K,\lambda)$ with $K$ a simplicial complex and $\lambda\in\CM(K)$, we get the pairs $(\Lk_K(v),\Proj_v\lambda)$, for $v$ any vertex of $K$. The operators $\Lk$ and $\Proj$ work together as a pair $(\Lk,\Proj)$.

The idea of Choi and Park puzzle method relies on what we computed on \Cref{ex_pentagon}. If we take a wedged simplicial $K(J)$ there are numerous copies of $K$ in $K(J)$ obtained after some link operations. We simultaneously consider a characteristic map $\Lambda\in\CM(K(J))$, then the consecutive projections of $\Lambda$ with respect to the same vertex sequence will provide a characteristic map $\lambda$ over $K$ as follows $(K(J),\Lambda)\stackrel{(\Lk,\Proj)}{\longrightarrow}\cdots\stackrel{(\Lk,\Proj)}{\longrightarrow}(K,\lambda)$.

Let us now review the definition of the (pre)diagram $D^\prime(K)$ of $K$.

\begin{definition}[(Pre)diagram of a PL sphere] Let $K$ be a PL sphere.
    We define the sets of the vertices ($\cV$), edges ($\cE$), and realizable squares ($\cS$) as follows.
\begin{enumerate}
\item $\cV =\CM(K)$
\item $\cE =$ the set of all characteristic maps in $\CM(\wed_p(K))$ for all vertices $p$ of $K$.
\item $\cS =$ the set of all characteristic maps in $\CM(\wed_{p,q}(K))$ for all distinct unordered pairs of vertices $(p,q)$  of $K$.
\end{enumerate}

The pair $(\cV,\cE)$ is called the \emph{prediagram} of $K$ and is denoted by $D^\prime(K)$.
The triplet $(\cV,\cE,\cS)$ is called the \emph{diagram} of $K$ and is denoted by $D(K)$.
\end{definition}

Now, here are some important facts concerning the (pre)diagram of a PL sphere.
\begin{enumerate}
\item Any edge $\lambda\in \cE$ can be associated to the triplet $(\lambda_1,\lambda_2,p)$, where $\lambda_1,\lambda_2\in \cV$ are two projections of $\lambda$, namely,
\begin{center}
$\lambda_1 = \Proj_{p_2}(\lambda)$,
$\lambda_2 = \Proj_{p_1}(\lambda)$,
 and we write \begin{tikzpicture}[baseline = (lambda.base),node distance=2cm, auto]
\node (lambda) {$\lambda_1$};
\node (lambda_prime) [right of = lambda] {$\lambda_2$};
\draw[-] (lambda) to node {$p$} (lambda_prime);
\end{tikzpicture}.
\end{center}
We thus say that $\lambda_1$ and $\lambda_2$ are \emph{$p$-adjacent}. An edge is said to be trivial if $\lambda_1=\lambda_2$. Every trivial edge $(\lambda,\lambda,p)$ is in $\cE$.
\item Just like for edges, any realizable square $\lambda\in \cS$ can be associated to the following projections onto $K$.
\begin{center}
\begin{tabular}{ccc}
\begin{tabular}{l}
$\lambda_{1,1} = \Proj_{p_2,q_2}(\lambda),$\\
$\lambda_{1,2} = \Proj_{p_2,q_1}(\lambda),$\\
$\lambda_{2,1} = \Proj_{p_1,q_2}(\lambda),$\\
$\lambda_{2,2} = \Proj_{p_1,q_1}(\lambda),$\\
\end{tabular} & and we write &
\begin{tikzpicture}[baseline =(anchor.base), node distance=2 cm, auto]
\node (lambda1) {$\lambda_{1,1}$};
\node (lambda2) [right of = lambda1] {$\lambda_{2,1}$};
\node (lambda3) [below of = lambda1] {$\lambda_{1,2}$};
\node (lambda4) [below of = lambda2] {$\lambda_{2,2}$};
\draw[-] (lambda1) to node {$p$} (lambda2);
\draw[-] (lambda1) to node (anchor) [left]  {$q$} (lambda3);
\draw[-] (lambda3) to node [below] {$p$} (lambda4);
\draw[-] (lambda2) to node {$q$} (lambda4);
\end{tikzpicture},
\end{tabular}
\end{center}
\noindent with $\lambda_{i,j}\in \cV$ for $i,j=1,2$, and each edge being in $\cE$.
\end{enumerate}

The diagram $D(K)$ stores the ``rules'' which are required for completing the puzzle. But now, we need a board in order to play this puzzle game. Let us remind that the main objective is to create $\CM(L(J))$ from $\CM(L)$. The board should intuitively represent the adjacent relations between copies of $L$ inside $L(J)$, as in \Cref{ex_pentagon}.

Let $J=(j_1,\ldots,j_m)$ be an $m$-tuple of positive integers.
We define the graph $G(J)$ of $J$ as the $1$-skeleton of the simplicial complex $\Delta^J := \Delta^{j_1-1}\times \cdots \times \Delta^{j_m-1}$. Each edge of $G(J)$ can be written uniquely as $$e=v_1\times v_2\times\ldots\times v_{p-1}\times e_{p} \times v_{p+1} \times \ldots\times v_m,$$ where $v_i$ is a vertex of $\Delta^{j_i-1}, i=1,\ldots,m,i\neq p$ and $e_p$ is an edge of $\Delta^{j_p-1}$. We endow $G(J)$ with an edge coloring where each $e$ is colored with $p\in [m]$.
\begin{remark} A subsquare of $G(J)$ is a subgraph of $G(J)$ that comes from a $2$-face of $\Delta^J$ which has 2 edges. We can see that the opposed edges of a square have pairwise the same color.
\end{remark}

\begin{example} Put $J = (3,2,1)$. The graph $G(J)$ is the $1$-skeleton of $\Delta^2\times\Delta^1\times\Delta^0$ we label its nodes as $(a_i,b_j,c_k)$ with $i=1,2,3$, $j=1,2$, and $k=1$.  One can notice that since we did not perform any wedge operation on the vertex $3$, there is no edges colored as $3$. We represent it as follows.
\begin{center}

\begin{tikzpicture}[node distance=4 cm, auto]
\node (a1b1c1) {$a_1 b_1 c_1$};
\node[right = of a1b1c1] (a2b1c1) {$a_2 b_1 c_1$};
\node[below of = a1b1c1] (a1b2c1)  {$a_1 b_2 c_1$};
\node[below of = a2b1c1] (a2b2c1)  {$a_2 b_2 c_1$};
\node[above left =1cm of a2b1c1] (a3b1c1)  {$a_3 b_1 c_1$};
\node[below of = a3b1c1] (a3b2c1)  {$a_3 b_2 c_1$};

\draw[-] (a1b1c1) to node [below] {$1$} (a2b1c1);
\draw[-] (a1b1c1) to node {$1$} (a3b1c1);
\draw[-] (a2b1c1) to node [above right] {$1$} (a3b1c1);

\draw[-] (a1b2c1) to node [below] {$1$} (a2b2c1);
\draw[-] (a1b2c1) to node {$1$} (a3b2c1);
\draw[-] (a2b2c1) to node [above right] {$1$} (a3b2c1);

\draw[-,dashed] (a1b1c1) to node {$2$} (a1b2c1);
\draw[-,dashed] (a2b1c1) to node {$2$} (a2b2c1);
\draw[-,dashed] (a3b1c1) to node {$2$} (a3b2c1);
\end{tikzpicture}
\end{center}

\end{example}

A \emph{realizable puzzle} is a coloring of the nodes of $G(J)$ by DJ classes of $K$ by respecting the edges and squares coloring rules given by the diagram of $K$. Namely, it is a map $\pi \colon \V(G(J)) \to \cV=\CM(K)$, which is in addition an edge coloring preserving pseudo-graph homomorphism $\pi \colon G(J) \to D^\prime(K)$ such that each image of a subsquare of $G(J)$ is a realizable square of $D(K)$.
The set of realizable puzzles will be denoted as $\cR\cP(K,J)$.

We thus have the following theorem.

\begin{theorem}[\cite{CP_wedge_2}, Theorem 5.4]
For any PL sphere $K$, there is a one-to-one correspondence
$$
    \CM(K(J)) \stackrel{1:1}{\longleftrightarrow} \cR\cP(K,J).
$$
\end{theorem}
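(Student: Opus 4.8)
The plan is to construct the bijection explicitly in both directions and check they are mutually inverse. First I would fix a representative structure for DJ classes over $K(J)$: since $K(J)$ has vertex set naturally indexed by the vertices of $\Delta^J$ (a vertex of $K(J)$ corresponds to a choice $v_1\times\cdots\times v_m$ together with a vertex of $L$, but the relevant point is that the combinatorics of $K(J)$ are governed by those of $K$ via repeated links), the key observation is the one already flagged in the text: for a characteristic map $\Lambda$ over $K(J)$, applying the pair of operators $(\Lk,\Proj)$ along a sequence of vertices contracts $K(J)$ down to $K$ and $\Lambda$ down to some $\lambda\in\CM(K)$. Given $\Lambda\in\CM(K(J))$, I would define the candidate puzzle $\pi_\Lambda$ by letting $\pi_\Lambda(w)$, for $w=v_1\times\cdots\times v_m$ a node of $G(J)$, be the DJ class obtained from $\Lambda$ by projecting away, for each coordinate $i$, all the wedge-vertices of the $i$-th block except $v_i$ — i.e. iterating $\Proj$ over the complement vertex set that singles out $w$. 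Well-definedness (independence of the order of projections) follows because projection is quotienting by a span, and these spans commute; this uses that $K(J)$ is pure so the link/projection formalism from the earlier section applies.

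Next I would verify that $\pi_\Lambda$ is a realizable puzzle. For the edge condition: if $w,w'$ are $p$-adjacent nodes of $G(J)$ (they differ only in the $p$-th coordinate, along an edge $e_p$ of $\Delta^{j_p-1}$), then projecting $\Lambda$ down along \emph{all} coordinates except keeping the edge $e_p$ in coordinate $p$ yields, by the same commuting-projections argument, a characteristic map over a single wedge $\wed_p(K)$, whose two vertex-projections are exactly $\pi_\Lambda(w)$ and $\pi_\Lambda(w')$; hence $(\pi_\Lambda(w),\pi_\Lambda(w'),p)\in\cE$. For the square condition: if a subsquare of $G(J)$ has colors $p,q$, projecting $\Lambda$ keeping the two edges (in coordinates $p$ and $q$) and single vertices elsewhere produces a characteristic map over $\wed_{p,q}(K)$, realizing the four corners and four edges of that square in $D(K)$. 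So $\pi_\Lambda\in\cR\cP(K,J)$, and $\Lambda\mapsto\pi_\Lambda$ is a well-defined map $\CM(K(J))\to\cR\cP(K,J)$.

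For the reverse direction, given a realizable puzzle $\pi$ I would reconstruct a characteristic map on $K(J)$ coloring each vertex of $K(J)$ (a pair consisting of a node $w$ of $G(J)$ and a compatible vertex of $L$) by reading off the appropriate entry of $\pi(w)$, after making canonical choices of representatives — here the dual-characteristic-map viewpoint is convenient, because the dual encodes the same data in a lower-dimensional ambient space and the wedge operation has a clean description on duals (the matrix $M$ of the remark essentially only grows by controlled rows/columns under a wedge). The content to check is that this vertex coloring satisfies the non-singularity condition on \emph{every} maximal face of $K(J)$. By \eqref{Wedge_def} and induction on $\len(J)$, every maximal face of $K(J)$ "lives" inside a bounded number of wedges — concretely inside the portion of the board spanned by at most two wedge-directions around any given seed copy — so non-singularity on it is exactly guaranteed by the edge rule (for faces touching one wedge direction) or the square rule (for faces touching two), which $\pi$ respects by hypothesis; faces within a single seed copy are covered by $\pi(w)\in\CM(K)$. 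Finally I would check the two maps are inverse: starting from $\Lambda$, forming $\pi_\Lambda$, and reconstructing gives back $\Lambda$ because projections followed by the canonical lift recover the original columns up to the chosen basis change (i.e. up to DJ-equivalence), and conversely $\pi\mapsto\Lambda_\pi\mapsto\pi_{\Lambda_\pi}=\pi$ because the projections of $\Lambda_\pi$ were defined to reproduce $\pi$'s values.

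\textbf{Main obstacle.} The hardest step is the reconstruction direction and in particular proving that local compatibility (edge and square rules, which only see one or two wedge directions at a time) forces \emph{global} non-singularity over all maximal faces of $K(J)$. This is where the precise combinatorial structure of $K(J)$ — that a maximal face never involves three mutually "wedged" vertices in a way that couples three coordinates simultaneously, because the $2$-faces of $\Delta^J$ already control everything, together with the "few stones determine the rest" rigidity phenomenon cited from \cite{CP_wedge_2} — must be used carefully; an induction on $\len(J)$ reducing a $K(J)$ face to a face in $\wed_p(K)$ or $\wed_{p,q}(K)$ via the link operation is the natural engine, but bookkeeping which face maps to which single/double wedge, and checking the induction's base and step match the edge/square definitions in $D(K)$, is the delicate part.
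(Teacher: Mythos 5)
First, a point of order: the paper does not prove this theorem itself --- it is imported verbatim from \cite{CP_wedge_2} (Theorem 5.4) with no in-paper argument --- so there is no proof here to compare against, and I can only assess your sketch on its own terms.

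Your forward direction (projecting $\Lambda$ along all but one vertex of each wedge block to colour the nodes of $G(J)$, and retaining one or two edges of $\Delta^J$ to certify the edge and square rules) is sound and is the routine half of the argument. The genuine gap is in the reconstruction direction, exactly where you placed the ``main obstacle'', but the reason you offer for surmounting it is false. A facet of $K(J)$ lying over a facet $F$ of $K$ consists of \emph{all} $j_i$ copies of each $i\in F$ together with $j_i-1$ copies of each $i\notin F$; for example, for $K=\partial\Delta^2$ and $J=(2,2,2)$ the facet over $\{1,2\}$ is $\{1_1,1_2,2_1,2_2,3_1\}$, which couples all three wedge directions at once. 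So the claim that ``a maximal face never involves three mutually wedged vertices in a way that couples three coordinates'' is wrong, and non-singularity on such a face cannot be read off from any single node, edge, or subsquare of the board. The tension between the $2$-locality of the rules ($\cE$ and $\cS$ only see $\wed_p(K)$ and $\wed_{p,q}(K)$) and the fact that a single facet of $K(J)$ can involve every wedge direction is precisely the nontrivial content of the theorem; your sketch assumes it away.

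The argument that actually closes this gap is an induction on $\len(J)$ driven by Proposition~4.4 of \cite{Choi-Park2016} applied at \emph{every} level, not only at the seed: write $K(J)=\wed_p(K(J-e_p))$ and build $\Lambda$ in basic form so that $\{\Lambda(p_1),\Lambda(p_2)\}$ is unimodular; then $\Lambda$ is non-singular if and only if its two projections, which are candidate characteristic maps over $K(J-e_p)$, are non-singular. This single step replaces all facet-by-facet case analysis and reduces global non-singularity to data one level down. By the inductive hypothesis the two projections correspond to realizable puzzles $\pi_1,\pi_2\in\cR\cP(K,J-e_p)$, and the remaining, genuinely delicate, step is to show that the pair $(\pi_1,\pi_2)$ assembles into an edge of the prediagram of $K(J-e_p)$ exactly when each pair $(\pi_1(w),\pi_2(w))$ is a $p$-edge of $D^\prime(K)$ and each square lying over an edge of $G(J-e_p)$ is realizable; this descent from ``edge one level up'' to ``edges and squares at seed level'' is where the rigidity statement (Proposition~4.3 of \cite{CP_wedge_2}) is used. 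Your proposal names the induction as ``the natural engine'' but substitutes the incorrect face-locality claim for its actual execution, so as written the reverse direction does not go through.
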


\begin{remark} In this theorem, we consider the set of realizable puzzles of $K$ with wedge $J$. To compute it, we need to compute the set of characteristic maps over $K$ by the Garrison and Scott algorithm. We thus need $K$ to be of minimal dimension for the Garrison and Scott algorithm to finish quickly (see \Cref{complexity}), namely $K$ should be a seed.
\end{remark}

The puzzle method gives us directly the following algorithm for finding $\CM(L(J))$, for $L$ a seed.
Its main idea is to enumerate all possible puzzles, namely put every possible combination of stone positions on the board, and then check if some combinations make the puzzle realizable.
\begin{algorithm}[The old puzzle algorithm] \label{old_puzz}
 \begin{itemize}
    \item[ ]
    \item \textbf{Input:} $L$ a seed PL sphere on $[m]$ with $\dim(L)=n-1$, $J \in \Z_{>0}^m$ and $\CM(L)$ the set of the DJ classes over $L$.
    \item \textbf{Output:} The set $\cR \cP(L,J)$ of all realizable puzzles $\pi:G(J)\to D^\prime(L)$.
    \item \textbf{Initialization:}
        \begin{itemize}
            \item[] $D(L)\leftarrow$ diagram of $L$.
            \item[] $\pi\leftarrow$ The empty map.
            \item[] $\cR \cP (L,J)\leftarrow \emptyset$.
            \item[] $\mathcal{PT}(L,J)\leftarrow \emptyset$. (Puzzle Trash)
        \end{itemize}
    \item \textbf{Procedure:}
        \begin{enumerate}
            \item For each node $v$ of the graph $G(J)$, set $\pi(v) = \lambda$ for some $\lambda\in \CM(L)$. The puzzle $\pi$ should not be in $\mathcal{PT}(L,J)$.
            \item If the image of some edge of $G(J)$ by $\pi$ is not in $D(L)$ then add $\pi$ to $\mathcal{PT}(L,J)$ and go to (1).
            \item If the image of some subsquare of $G(J)$ by $\pi$ is not realizable then add $\pi$ to $\mathcal{PT}(L,J)$ and go to (1).
            \item Add $\pi$ to $\cR \cP(L,J)$.
		\end{enumerate}
\end{itemize}
\end{algorithm}

One can notice that the problem of this puzzle algorithm is that it needs to check if all the edges and the subsquares of the puzzle are in the diagram (Procedure (2) and(3)), and this for each possible puzzle.
That requires a lot of computation time (see \Cref{old_vs_new_puzz} for a detailed computation of the complexity of this algorithm), and we thus need a drastic improvement.

In the following section, we will create a constructive and procedural algorithm for the puzzle method to become applicable and faster.

\section{The improved puzzle algorithm}

\subsection{Computation of the edge rules}

Let us understand more explicitly what an edge of the prediagram represents.

First of all, let us introduce some notations.
For $\lambda\in\CM(K)$ and $k=1,\ldots,n$, we may assume that $\lambda(k) = e_k$, the $k$th vector of the canonical basis, and we will use the following notations.
\begin{align*}
\lambda &= \begin{blockarray}{ccccccccccc}
&1      & \cdots  & k                        & \cdots & n   & n+1 & \cdots & n+j & \cdots & m \\
\begin{block}{c(ccccc|ccccc)}
1&1      & 0       & \cdots  & \cdots & 0      &   \lambda(n+1)_1 & \cdots &  \lambda(n+j)_1 & \cdots &  \lambda(m)_1\\
&0 & \ddots  & \ddots  &  & \vdots &   \vdots       &   &  \vdots     &   &  \vdots \\
k&\vdots & \ddots  & 1       & \ddots & \vdots &   \lambda(n+1)_k & \cdots &  \lambda(n+j)_k & \cdots &  \lambda(m)_k\\
&\vdots &   & \ddots  & \ddots & 0 &   \vdots       &   &  \vdots     &   &  \vdots \\
n&0      & \cdots  & \cdots  & 0 & 1      &   \lambda(n+1)_n & \cdots &  \lambda(n+j)_n & \cdots &  \lambda(m)_n\\
\end{block}
\end{blockarray}\\
&=\begin{blockarray}{ccccccccccc}
&1      & \cdots  & k                        & \cdots & n   & n+1 & \cdots & n+j & \cdots & m \\
\begin{block}{c(ccccc|ccccc)}
1&1      & 0       & \cdots  & \cdots & 0      &   \bar{\lambda}(1)_{1} & \cdots &  \bar{\lambda}(1)_j & \cdots &  \bar{\lambda}(1)_{m-n}\\
&0 & \ddots  & \ddots  &  & \vdots &   \vdots       &   &  \vdots     &   &  \vdots \\
k&\vdots & \ddots  & 1       & \ddots & \vdots &   \bar{\lambda}(k)_{1} & \cdots &  \bar{\lambda}(k)_j & \cdots &  \bar{\lambda}(k)_{m-n}\\
&\vdots &   & \ddots  & \ddots & 0 &   \vdots       &   &  \vdots     &   &  \vdots \\
n&0      & \cdots  & \cdots  & 0 & 1      &   \bar{\lambda}(n)_{1} & \cdots &  \bar{\lambda}(n)_j & \cdots &  \bar{\lambda}(n)_{m-n}\\
\end{block}
\end{blockarray},
\end{align*}
for $j=1,\ldots,m-n$. Note that it coincides with the dual characteristic map notation.
\label{constr_puzzle}
Let $\lambda\in \CM(\wed_p(K))$, and let us denote its two projections by $\lambda_1 = \left(\begin{array}{c|c}& \bar{\lambda}_1(1) \\ I_n & \vdots \\ & \bar{\lambda}_1(n)\end{array}\right)$ and $\lambda_2 = \left(\begin{array}{c|c}& \bar{\lambda}_2(1) \\ I_n & \vdots \\ & \bar{\lambda}_2(n)\end{array}\right)$. They are $p$-adjacent by definition.

On one hand, in the case $p\in \{1,\ldots,n\}$, the characteristic map $\lambda$ which makes $\lambda_1$ and $\lambda_2$ $p$-adjacent can be written with the following form, called the basic form of $\lambda$ (defined in \cite{CP_wedge_2}).
$$\lambda=
\begin{blockarray}{cccccccccc}
& 1      & 2      &   & p_1    & p_2    &  & n-1    & n      & n+1, \ldots, m \\
\begin{block}{c(cccccccc|c)}
1 & 1      & 0      & \cdots  & 0      & 0      & \cdots & \cdots     & 0      & \bar{\lambda}_1(1) = \bar{\lambda}_2(1) \\
2 & 0      & 1      & \ddots  & \vdots & \vdots &  &  & \vdots &  \bar{\lambda}_1(2) = \bar{\lambda}_2(2) \\
&\vdots & \ddots & \ddots  & 0      & \vdots &  &  & \vdots & \vdots \\
p_1&\vdots &  & \ddots  & 1      & 0      &  &  & \vdots & \bar{\lambda}_1(p) \\
p_2&\vdots &  &   & 0      & 1      & \ddots &  & \vdots & \bar{\lambda}_2(p) \\
&\vdots &  &   & \vdots & 0      & \ddots & \ddots & \vdots & \vdots \\
n-1&\vdots &  &   & \vdots & \vdots & \ddots & 1      & 0      & \bar{\lambda}_1(n-1) = \bar{\lambda}_2(n-1) \\
n&0      & \cdots & \cdots  & 0 & 0 & \cdots & 0      & 1      & \bar{\lambda}_1(n) = \bar{\lambda}_2(n)\\
\end{block}
\end{blockarray},
$$
if we denote by $\phi := \bar{\lambda}_1(p) + \bar{\lambda}_2(p)$ the row vector corresponding to the change at row $p$ between $\bar{\lambda}_1$ and $\bar{\lambda}_2$. The edge in the pre-diagram can be seen as follows.

\begin{equation} \label{edge1}
\lambda_1 = \begin{array}{ccc}
\left(\begin{array}{c|c}& \bar{\lambda}_1(1) \\
& \vdots \\
I_n & \bar{\lambda}_1(p) \\
& \vdots \\
& \bar{\lambda}_1(n)\end{array}\right)
& \xrightarrow{\phi,p}
&
\lambda_2=\left(\begin{array}{c|c}& \bar{\lambda}_1(1) \\
& \vdots \\
I_n & \bar{\lambda}_1(p) + \phi \\
& \vdots \\
& \bar{\lambda}_1(n)\end{array}\right)
\end{array}.
\end{equation}

Conversely, one can see that $\lambda_2 \stackrel{\phi, p}{\longrightarrow} \lambda_1$ as well.

On the other hand, if $p>n$, we can find $1 \leq k \leq n$ such that the $k$th coordinate $\lambda_1(p)_k$ is equal to $1$. We can then reduce $\lambda_1(p)$ onto $e_k$ and thus $\lambda_1(k)$ becomes $\lambda_1(p)$.
If we write $a_j = \bar{\lambda}_1(k)_{j} = \lambda_1(n+j)_k$, $j=1,\ldots,m-n$, we have
$$
    \lambda_1 =
\begin{blockarray}{cccccccccc}
&1      & 2      & \cdots  & k                        & \cdots & n-1    & n      & p       & (n+j) \neq p \\
\begin{block}{c(ccccccc|c|c)}
1&1      & 0      & \cdots  & \lambda_1(p)_1     & \cdots & \cdots     & 0      & 0       & \bar{\lambda}_1(1)_j + \lambda_1(p)_1 a_j \\
2&0      & 1      & \ddots  & \vdots     &  &  & \vdots &  \vdots &  \bar{\lambda}_1(2)_j + \lambda_1(p)_2 a_j \\
&\vdots & \ddots & \ddots  & \vdots &  &  & \vdots &  0      & \vdots \\
k&\vdots &  & \ddots  & 1 = \lambda_1(p)_k &  &  & \vdots &  1      & a_j \\
&\vdots &  &   & \vdots & \ddots & \ddots & \vdots &  0      & \vdots \\
n-1&\vdots &  &   & \vdots                   & \ddots & 1      & 0      &  \vdots & \bar{\lambda}_1(n-1)_j + \lambda_1(p)_{n-1} a_j\\
n&0      & \cdots & \cdots  & \lambda_1(p)_n     & \cdots & 0      & 1      &  0      & \bar{\lambda}_1(n)_j + \lambda_1(p)_{n} a_j\\
\end{block}
\end{blockarray}.
$$

If we switch the columns $k$ and $p$, we get the same as previously after wedging
$$
\lambda=
\begin{blockarray}{ccccccccccc}
& 1      & 2      &   & p_1    & p_2    &  & n-1    & n     & k & (n+j) \neq p \\
\begin{block}{c(cccccccc|c|c)}
1 & 1      & 0      & \cdots  & 0      & 0      & \cdots & \cdots     & 0  & \lambda_1(p)_1    & \bar{\lambda}_1(1)_j + \lambda_1(p)_1 a_j \\
2 & 0      & 1      & \ddots  & \vdots & \vdots &   & & \vdots & \vdots & \bar{\lambda}_1(2)_j + \lambda_1(p)_2 a_j \\
&\vdots & \ddots & \ddots  & 0      & \vdots &  &  & \vdots & \vdots& \vdots \\
k_1&\vdots &  & \ddots  & 1      & 0      &  &  & \vdots & 1 & a_j \\
k_2&\vdots &  &   & 0      & 1      & \ddots &  & \vdots&b_{0} & b_j \\
&\vdots &  &   & \vdots & 0      & \ddots & \ddots & \vdots & \vdots& \vdots \\
n-1&\vdots &  &   & \vdots & \vdots & \ddots & 1      & 0 & \vdots     & \bar{\lambda}_1(n-1)_j + \lambda_1(p)_{n-1} a_j \\
n&0      & \cdots & \cdots  & 0 & 0 & \cdots & 0      & 1  &\lambda_1(p)_n    & \bar{\lambda}_1(n)_j + \lambda_1(p)_{n} a_j\\
\end{block}
\end{blockarray},
$$
with $b_{0}=1$ because otherwise $\{1,\ldots,k,\ldots,n\}$ is not a face of $\wed_p(K)$ anymore. We then perform the projection on the column $p_1$ and switch columns $p$ and $k$ again, we obtain
$$ \lambda_2 =
\begin{blockarray}{cccccccccc}
&1      & 2      & \cdots  & k                        & \cdots & n-1    & n      & p       & (n+j)\neq p \\
\begin{block}{c(ccccccc|c|c)}
1&1      & 0      & \cdots  & \lambda_1(p)_1     & \cdots & \cdots     & 0      & 0       & \bar{\lambda}_1(1)_j + \lambda_1(p)_1 a_j \\
2&0      & 1      & \ddots  & \vdots     &  &  & \vdots &  \vdots &  \bar{\lambda}_1(2)_j + \lambda_1(p)_2 a_j \\
&\vdots & \ddots & \ddots  & \vdots &  &  & \vdots &  0      & \vdots \\
k&\vdots &  & \ddots  & 1 = \lambda_1(p)_k &  &  & \vdots &  1      & b_j \\
&\vdots &  &   & \vdots & \ddots & \ddots & \vdots &  0      & \vdots \\
n-1&\vdots &  &   & \vdots                   & \ddots & 1      & 0      &  \vdots & \bar{\lambda}_1(n-1)_j + \lambda_1(p)_{n-1} a_j\\
n&0      & \cdots & \cdots  & \lambda_1(p)_n     & \cdots & 0      & 1      &  0      & \bar{\lambda}_1(n)_j + \lambda_1(p)_{n} a_j\\
\end{block}
\end{blockarray},$$
and so if we reduce again to have $\lambda_2(k)$ being equal to $e_k$ we get

$$ \lambda_2 =
\begin{blockarray}{cccccccccc}
&1      & 2      & \cdots  & k                        & \cdots & n-1    & n      & p       & (n+j) \neq p \\
\begin{block}{c(ccccccc|c|c)}
1&1      & 0      & \cdots  & 0    & \cdots & \cdots     & 0      & \lambda_1(p)_1       & \bar{\lambda}_1(1)_j + \lambda_1(p)_1 (a_j + b_j) \\
2&0      & 1      & \ddots  & \vdots     &  &  & \vdots &  \vdots &  \bar{\lambda}_1(2)_j + \lambda_1(p)_2 (a_j + b_j) \\
&\vdots & \ddots & \ddots  & \vdots &  &  & \vdots &  \vdots      & \vdots \\
k&\vdots &  & \ddots  & 1 &  &  & \vdots &  1      & b_j \\
&\vdots &  &   & \vdots & \ddots & \ddots & \vdots &  \vdots      & \vdots \\
n-1&\vdots &  &   & \vdots                   & \ddots & 1      &0      &  \vdots & \bar{\lambda}_1(n-1)_j + \lambda_1(p)_{n-1} (a_j + b_j)\\
n&0      & \cdots & \cdots  & 0     & \cdots & 0      & 1      &  \lambda_1(p)_n      & \bar{\lambda}_1(n)_j + \lambda_1(p)_{n} (a_j + b_j)\\
\end{block}
\end{blockarray}.$$

Thus if we write $\psi_{j} := a_j + b_j$, for $j=1,\ldots,m-n$ and $(n+j)\neq p$, and $\psi_{p}=0$ for $(n+j)=p$, the edge in the pre-diagram can be seen as follows

\begin{equation} \label{edge2}
\lambda_1=\begin{array}{cccc}
\left(\begin{array}{c|c}& \bar{\lambda}_1(1) \\
I_n & \vdots \\
& \bar{\lambda}_1(n)\end{array}\right)
& \xrightarrow{\psi,p}
&
\lambda_2=\left(\begin{array}{c|c}& \bar{\lambda}_1(1) + \lambda_1(p)_1\psi \\
I_n& \vdots \\
& \bar{\lambda}_1(n) + \lambda_1(p)_n\psi \end{array}\right)
\end{array},
\end{equation}
and $\lambda_2 \stackrel{\psi, p}{\longrightarrow} \lambda_1$.

Thus, if two DCM (dual characteristic maps) $\bar{\lambda}_1$ and $\bar{\lambda}_2$ are $p$-adjacent, the implying edge in the diagram can be associated to a unique (since in our calculations we chose a unique representative for each dual DJ class) row vector $\phi$ in \eqref{edge1} or $\psi$ in \eqref{edge2}.

Conversely, if we take $\lambda_1,\lambda_2\in \CM(K)$ such that there exists a relation of the form \eqref{edge1} or \eqref{edge2} according to a vertex $p$, then processing the same calculations in the opposite direction allows us to build a characteristic map $\lambda$ over $\wed_p(K)$ with $\{\lambda(p_1),\lambda(p_2)\}$ being a unimodular set. We now have the following proposition in \cite{Choi-Park2016}.
\begin{proposition}[Proposition~4.4 \cite{Choi-Park2016}]
Let $K$ be a PL sphere with vertex set $V$. For $v\in V$ and let $\lambda$ be a characteristic map on $\wed_v(K)$ such that $\{\lambda(v_1),\lambda(v_2)\}$ is a unimodular set. Then $\lambda$ is non-singular if and only if $\Proj_{v_1}(\lambda)$ and $\Proj_{v_2}(\lambda)$ are non-singular. Furthermore, $\lambda$ is uniquely determined by $\Proj_{v_1}(\lambda)$ and $\Proj_{v_2}(\lambda)$.
\end{proposition}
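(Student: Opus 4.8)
The plan is to deduce the statement one facet at a time, after first pinning down the combinatorics of $W:=\wed_v(K)$. Write $\dim K=n-1$, so that $\dim W=n$ and a characteristic map over $W$ takes values in $\Z_2^{n+1}$. I would begin by recording two structural facts about $W$. First, \emph{every facet of $W$ contains $v_1$ or $v_2$}: since $\Lk_K(v)\subseteq K\setminus\{v\}$, a face of $W$ disjoint from $\{v_1,v_2\}$ already lies in $K\setminus\{v\}$, and such a face $\sigma$ is never maximal in $W$ because $\{v_1\}\cup\sigma\in\partial I\ast(K\setminus\{v\})\subseteq W$. Second, unwinding \eqref{Wedge_def} gives $\Lk_W(v_1)=(\{v_2\}\ast\Lk_K(v))\cup(K\setminus\{v\})$, which, compared with the cone decomposition $K=(\{v\}\ast\Lk_K(v))\cup(K\setminus\{v\})$, shows $\Lk_W(v_1)\simeq K$ via the relabeling $v_2\mapsto v$, and symmetrically $\Lk_W(v_2)\simeq K$; this is the phenomenon illustrated in \Cref{ex_pentagon}, and it ensures that $\Proj_{v_1}(\lambda)$ and $\Proj_{v_2}(\lambda)$ are characteristic maps over copies of $K$.

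For the equivalence, the forward implication is the general fact --- recorded with the definition of $\Proj$ --- that a projection of a non-singular map is non-singular: if $F$ is a facet of $\Lk_W(v_1)$ then $\{v_1\}\cup F$ is a facet of $W$, and reducing the basis $\lambda(\{v_1\}\cup F)$ of $\Z_2^{n+1}$ modulo $\langle\lambda(v_1)\rangle$ leaves a basis of the $n$-dimensional quotient. For the converse, I would take an arbitrary facet $F$ of $W$; by the first structural fact, and up to the symmetry $v_1\leftrightarrow v_2$ (which swaps the two hypotheses --- exactly why both projections are needed), we may assume $v_1\in F$, so that $\rho:=F\setminus\{v_1\}$ is a facet of $\Lk_W(v_1)$. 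Non-singularity of $\Proj_{v_1}(\lambda)$ says that $\{[\lambda(s)]:s\in\rho\}$ is a basis of $\Z_2^{n+1}/\langle\lambda(v_1)\rangle$; since $\lambda(v_1)\neq0$ (the pair $\{\lambda(v_1),\lambda(v_2)\}$ being unimodular), the elementary lifting lemma --- for $u\neq0$, a set $\{u,w_1,\dots,w_n\}$ is a basis of an $(n+1)$-dimensional $\Z_2$-space if and only if $\{[w_1],\dots,[w_n]\}$ is a basis of the quotient by $\langle u\rangle$ --- applied with $u=\lambda(v_1)$ shows that $\lambda(F)=\{\lambda(v_1)\}\cup\{\lambda(s):s\in\rho\}$ is a basis of $\Z_2^{n+1}$. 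Since $F$ was arbitrary, $\lambda$ is non-singular.

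For the uniqueness clause: once a representative of $\lambda$ is chosen so that $\lambda(v_1)$ and $\lambda(v_2)$ are distinct members of the canonical basis (which we may do, the pair being unimodular), every remaining column is forced, because $\Proj_{v_1}(\lambda)$ determines $\lambda(s)\bmod\langle\lambda(v_1)\rangle$ and $\Proj_{v_2}(\lambda)$ determines $\lambda(s)\bmod\langle\lambda(v_2)\rangle$, and these two residues pin $\lambda(s)$ down since $\langle\lambda(v_1)\rangle\cap\langle\lambda(v_2)\rangle=0$. This is precisely the reconstruction carried out just before the statement: reversing the elementary computations leading to \eqref{edge1} or \eqref{edge2}, according to the position of the wedge vertex, rebuilds the columns of $\lambda$ from $\Proj_{v_1}(\lambda)$, $\Proj_{v_2}(\lambda)$ and the vertex $v$, and every step there is forced by the data.

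The point I expect to require the most care is the first structural fact: one must check that $\wed_v(K)$ is pure of dimension $n$ --- so that ``facet'' unambiguously means an $(n+1)$-element subset of the vertex set --- and that no facet avoids $\{v_1,v_2\}$; both rely on the inclusion $\Lk_K(v)\subseteq K\setminus\{v\}$ together with $K$ being a PL sphere. After that, the equivalence is routine linear algebra, and the uniqueness reduces either to the one-line ``trivial intersection'' observation above or to the explicit (slightly case-heavy but already completed) reconstruction.
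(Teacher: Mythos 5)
Your proof is correct, and it is genuinely more self-contained than what the paper does: the paper does not prove this proposition at all but imports it from Choi--Park, and the surrounding text instead carries out the explicit coordinate computation (normalizing $\lambda$ to the ``basic form'' and deriving \eqref{edge1}--\eqref{edge2}), which amounts to a constructive, matrix-level verification of the same facts. Your route replaces that computation by two structural observations about $\wed_v(K)$ --- that every facet meets $\{v_1,v_2\}$ and that $\Lk_{\wed_v(K)}(v_i)\simeq K$ --- plus the elementary lifting lemma for quotients by a nonzero vector and the observation $\langle\lambda(v_1)\rangle\cap\langle\lambda(v_2)\rangle=0$ for uniqueness. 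What the coordinate approach buys (and why the paper needs it) is the \emph{explicit formula} for the reconstructed $\lambda$ in terms of the row vectors $\phi$, $\psi$, which is what feeds \Cref{lemma_edge} and the algorithm; what your approach buys is a cleaner conceptual proof of existence, the equivalence, and uniqueness that does not depend on a choice of representative. The one point you correctly flag as delicate --- purity of $\wed_v(K)$ and the classification of its facets --- does hold: the facets are exactly $\{v_1,v_2\}\cup\rho$ for $\rho$ a facet of $\Lk_K(v)$ and $\{v_i\}\cup\sigma$ for $\sigma$ a facet of $K$ not containing $v$ (a maximal face of $K\setminus\{v\}$ that is not a facet of $K$ lies in $\Lk_K(v)$ and hence is absorbed into the first family), so every facet has $n+1$ vertices and meets $\{v_1,v_2\}$, and your argument goes through.
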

Since $\lambda_1$ and $\lambda_2$ are chosen non-singular, then $\lambda$ is non-singular. We write $\lambda:=\lambda_1 \wedge_{v} \lambda_2$ and we call it the \emph{$v$-wedge} of $\lambda_1$ and $\lambda_2$.

It then provides us the following lemma concerning the edges of the diagram.

\begin{lemma}\label{lemma_edge}
Let $K$ be a PL sphere on $[m]$ with $\dim(K)=n-1$, $\lambda_1,\lambda_2\in \CM(K)$ and $p\in[m]$. Then, the following are equivalent
\begin{enumerate}
\item There exists $\lambda \in \CM(\wed_p(K))$ such that $\lambda = \lambda_1\wedge_{p}\lambda_2$;
\item There exists a $\Z_2$ row vector $\psi$ of size $(m-n)$ such that $\bar{\lambda}_2 = h\bar{\lambda}_1$\\
with $h= \left(\begin{array}{c|c}
I_n & \begin{array}{c} \lambda_1(p)_1\psi \\ \vdots \\ \lambda_1(p)_i\psi \\ \vdots \\ \lambda_1(p)_n\psi \end{array} \\ \hline
0   & I_{m-n}\end{array} \right) \in \cM(m,m,\Z_2)$, and $\bar{\lambda}_i = \left(\begin{array}{c} \bar{\lambda}_i(1) \\ \vdots \\ \bar{\lambda}_i(n) \\ \hline I_{m-n}\end{array}\right)$, for $i=1,2$.
\end{enumerate}
\end{lemma}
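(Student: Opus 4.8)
Looking at this lemma, I need to prove the equivalence between the existence of a $p$-wedge characteristic map and a specific matrix relation between the dual characteristic maps.

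\textbf{Approach.} The plan is to exploit the two explicit calculations carried out just before the lemma statement, namely the derivations leading to \eqref{edge1} and \eqref{edge2}. These computations already establish, case by case on whether $p \le n$ or $p > n$, exactly what the transition $\bar{\lambda}_1 \to \bar{\lambda}_2$ looks like. So the proof is essentially a matter of (i) checking that both the $\phi$-form \eqref{edge1} and the $\psi$-form \eqref{edge2} are subsumed by the single matrix equation $\bar{\lambda}_2 = h\bar{\lambda}_1$ with $h$ of the stated block-triangular shape, and (ii) running the argument backwards to recover the characteristic map over $\wed_p(K)$.

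\textbf{Key steps in order.} First I would prove $(1)\Rightarrow(2)$. Given $\lambda = \lambda_1 \wedge_p \lambda_2 \in \CM(\wed_p(K))$, I distinguish the two cases. If $p \le n$ (after relabelling, $p \in \{1,\dots,n\}$), the basic form shows $\bar{\lambda}_2(i) = \bar{\lambda}_1(i)$ for $i \ne p$ and $\bar{\lambda}_2(p) = \bar{\lambda}_1(p) + \phi$; since $\lambda_1(p) = e_p$ here, we have $\lambda_1(p)_i = \delta_{ip}$, so setting $\psi := \phi$ gives precisely $\bar{\lambda}_2(i) = \bar{\lambda}_1(i) + \lambda_1(p)_i \psi$ for all $i = 1,\dots,n$, and the bottom block $I_{m-n}$ is unchanged — this is exactly $\bar{\lambda}_2 = h\bar{\lambda}_1$. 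If $p > n$, the computation leading to \eqref{edge2} gives $\bar{\lambda}_2(i)_j = \bar{\lambda}_1(i)_j + \lambda_1(p)_i \psi_j$ with $\psi_j = a_j + b_j$ and $\psi_p = 0$; again this is the matrix equation with the stated $h$. Conversely, for $(2)\Rightarrow(1)$, given $\psi$ and the relation $\bar{\lambda}_2 = h\bar{\lambda}_1$, I reconstruct $\lambda$ by reversing the block calculations: one forms the $\wed_p$ of $\lambda_1$ and $\lambda_2$ using the recipe of \Cref{constr_puzzle} (setting $\{\lambda(p_1),\lambda(p_2)\}$ unimodular, e.g. $\lambda(p_1) = e_{p_1}$, $\lambda(p_2) = e_{p_2}$ in the $p\le n$ case, or using the $b_0 = 1$ normalization in the $p > n$ case), and then invokes Proposition~4.4 of \cite{Choi-Park2016}: since $\Proj_{p_1}\lambda = \lambda_2$ and $\Proj_{p_2}\lambda = \lambda_1$ are non-singular by hypothesis, $\lambda$ is non-singular, so $\lambda = \lambda_1 \wedge_p \lambda_2 \in \CM(\wed_p(K))$.

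\textbf{Main obstacle.} The conceptual content is already in the displayed matrix computations, so the real work is bookkeeping: one must be careful that the vector $\psi$ produced in the $p>n$ case genuinely has its $p$-indexed coordinate equal to $0$ (so that $h$ really fixes the column indexed by $p$ and the bottom $I_{m-n}$ block is consistent), and that the choice of pivot $k$ with $\lambda_1(p)_k = 1$ does not affect the resulting $\psi$ — this is where the remark that each dual DJ class has a \emph{unique} chosen representative is used, guaranteeing $\phi$ (resp. $\psi$) is well-defined. I expect the reconciliation of the two cases into the single uniform statement $\bar{\lambda}_2 = h\bar{\lambda}_1$ to be the step most in need of care, since in the $p \le n$ case the identity $\lambda_1(p)_i = \delta_{ip}$ collapses the general formula to the special ``only row $p$ changes'' behaviour, and one should remark explicitly that \eqref{edge1} is the $\lambda_1(p) = e_p$ specialization of \eqref{edge2} so that a single argument covers both.
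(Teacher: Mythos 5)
Your proposal is correct and follows essentially the same route as the paper: the paper's proof likewise observes that the two displayed transformations \eqref{edge1} (where $\lambda_1(p)=e_p$ collapses the general formula) and \eqref{edge2} are both instances of the single matrix identity $\bar{\lambda}_2 = h\bar{\lambda}_1$ with $h$ of the stated block form, and the converse is obtained by running the same block computations backwards together with Proposition~4.4 of \cite{Choi-Park2016}. No substantive difference to report.
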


\begin{proof}The proof comes directly when one notices that the transformations \eqref{edge1} (since $\lambda_1(p) = e_p, p=1,\ldots,n$) and \eqref{edge2} can be seen as follows.
$$
\left(\begin{array}{c} \bar{\lambda}_1(1) + \lambda_1(p)_1\psi \\
\vdots \\
\bar{\lambda}_1(i) + \lambda_1(p)_i\psi \\
\vdots \\
\bar{\lambda}_1(n) + \lambda_1(p)_n\psi \\
\hline I_{m-n}
\end{array}\right) = \left(\begin{array}{c|c}
I_n & \begin{array}{c}\lambda_1(p)_1\psi \\ \vdots \\ \lambda_1(p)_i\psi \\ \vdots \\ \lambda_1(p)_n\psi \end{array} \\ \hline
0   & I_{m-n}\end{array} \right) \left(\begin{array}{c} \bar{\lambda}_1(1) \\
\vdots \\
\bar{\lambda}_1(i) \\
\vdots \\
\bar{\lambda}_1(n)\\
\hline I_{m-n}\end{array}\right) \Leftrightarrow \bar{\lambda}_2 = h \bar{\lambda}_1,$$
with $h$ of the desired form.
\end{proof}

\begin{remark}This construction can be clarified as follows. Two dual DJ classes $\bar{\lambda}_1$ and $\bar{\lambda}_2$ are $p$-adjacent if and only if we can go from $\bar{\lambda}_1$ to $\bar{\lambda}_2$ by adding the same row vector $\psi$ to each row of $\bar{\lambda}_1$ at coordinate $j$ which satisfies $\lambda_1(p)_j=1$. One can then see that from any pair of characteristic maps over $K$, it is very easy to check if they are $p$-adjacent or not, for some vertex $p$ of $K$, using linear algebra. Furthermore, this construction gives us a very direct way of computing the prediagram of $K$ simply from $\CM(K)$.
\end{remark}

For all that follows, since the matrices $h$ depend on $p$, $\lambda_1$ and $\psi$, we will denote such $h$ by a triplet $(p,\lambda_1,\psi)$.

For a dual characteristic map $\bar{\lambda}_0 \in \DCM(K)$, let us write $H_p(\bar{\lambda}_0)$ the set of the matrices $h = (p,\lambda_0,\psi)$ from all the edges \begin{tikzpicture}[baseline = (lambda.base),node distance=1.5cm, auto]
\node (lambda) {$\bar{\lambda}_0$};
\node (lambda_prime) [right of = lambda] {$h\bar{\lambda}_0$};
\draw[-] (lambda) to node {$p$} (lambda_prime);
\end{tikzpicture} which are in the prediagram.
From now on, we will write such an edge \begin{tikzpicture}[baseline = (lambda.base),node distance=2cm, auto]
\node (lambda) {$\bar{\lambda}_0$};
\node (lambda_prime) [right of = lambda] {$h\bar{\lambda}_0$};
\draw[-] (lambda) to node {$p,h$} (lambda_prime);
\end{tikzpicture}.

\begin{remark} \label{remark_unchanged_colors} Here are some important properties of $H_p(\bar{\lambda}_0)$.
\begin{enumerate}
\item It is easy to check that this set is actually a commutative subgroup of $GL(m,\Z_2)$ whose elements act from the left on the set of the $\bar{\lambda}\in \DCM(K)$ which are $p$-adjacent to $\bar{\lambda}_0$.
\item  Furthermore, one can notice that if $q\neq p$ is another vertex of $K$, and if $\lambda_0(p) = \lambda_0(q)$, then if $h = (p,\lambda_0,\psi) \in H_p(\bar{\lambda}_0)$ we have $h^\prime = (q,\lambda_0,\psi) \in H_q(\bar{\lambda}_0)$ so $\psi(p)=\psi(q)=0$ which means that elements of $H_p(\bar{\lambda}_0)$ cannot modify the color of the vertices having the same color as $\lambda_0(p)$ and since the roles of $p$ and $q$ can be switched, $H_q(\bar{\lambda}_0) = H_p(\bar{\lambda}_0)$.
\item The action of $H_p(\bar{\lambda}_0)$ on the $p$-adjacent component of $\bar{\lambda}_0$ is free (and transitive) by construction. Therefore if $\bar{\lambda}_1$ and $\bar{\lambda}_2$ are $p$-adjacent, then $H_p(\bar{\lambda}_1) = H_p(\bar{\lambda}_2)$.
\end{enumerate}
\end{remark}

Choi and Park found that a realizable puzzle is automatically entirely determined if one chooses the image of a node $v$ of $G(J)$ and of every nodes $w$ connected to $v$ by an edge as in the following proposition.
\begin{proposition}[\cite{CP_wedge_2}, Proposition 4.3]\label{CP_nice_prop} Let $v$ be any fixed node of $G(J)$ and suppose that there are two realizable puzzles $\pi,\pi^\prime:G(J)\to D^\prime(K)$. Then $\pi = \pi^\prime$ if and only if $\pi(v)=\pi^\prime(v)$ and $\pi(w) = \pi^\prime(w)$ for every node $w$ of $G(J)$ which is adjacent to $v$.
\end{proposition}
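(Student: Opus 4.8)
The plan is to prove the non-trivial implication: if $\pi(v)=\pi'(v)$ and $\pi(w)=\pi'(w)$ for every node $w$ adjacent to $v$, then $\pi=\pi'$. The converse is immediate.

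\textbf{Combinatorics of the board.} I would first record two elementary facts about $G(J)$. Since $G(J)$ is the $1$-skeleton of the polytope $\Delta^J=\Delta^{j_1-1}\times\cdots\times\Delta^{j_m-1}$, it is connected; two of its nodes are adjacent precisely when their coordinate vectors differ in exactly one factor, so the graph distance $d(v,w)$ equals the number of factors in which $v$ and $w$ differ. Likewise, a subsquare of $G(J)$ is obtained by fixing all coordinates but two — say the $p$-th and the $q$-th — and letting each of these range over a pair of vertices of the corresponding simplex; its four corners then carry two parallel $p$-coloured edges and two parallel $q$-coloured edges.

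\textbf{The square lemma (the crux).} The heart of the argument is the claim that \emph{in a realizable puzzle, the colours at any three corners of a subsquare determine the colour at the fourth.} To prove it, observe that the image of a subsquare is by definition a realizable square, i.e. a $\Lambda\in\CM(\wed_{p,q}(K))$ whose four corners are the projections $\Proj_{p_a,q_b}(\Lambda)$, $a,b\in\{1,2\}$. By Proposition~4.4 of \cite{Choi-Park2016} applied at $q$ (legitimate because $\{q_1,q_2\}$ is a face of $\wed_{p,q}(K)$, so $\{\Lambda(q_1),\Lambda(q_2)\}$ is unimodular), $\Lambda$ is determined by $\Proj_{q_1}(\Lambda)$ and $\Proj_{q_2}(\Lambda)$, which lie in $\CM(\wed_p(K))$ and are $q$-adjacent there; and applying the same proposition at $p$, $\Proj_{q_1}(\Lambda)$ is recovered from the two corners $\Proj_{p_1,q_1}(\Lambda)$ and $\Proj_{p_2,q_1}(\Lambda)$. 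So from two adjacent corners we reconstruct $\Proj_{q_1}(\Lambda)$, and by Lemma~\ref{lemma_edge} (applied to the PL sphere $\wed_p(K)$) the remaining data is a single matrix $h=(q,\Proj_{q_1}(\Lambda),\psi)$ with $\overline{\Proj_{q_2}(\Lambda)}=h\,\overline{\Proj_{q_1}(\Lambda)}$; it then remains to show that the one further corner $\Proj_{p_1,q_2}(\Lambda)$ pins down $\psi$. Passing to dual characteristic maps, $\Proj_{p_1}(\cdot)$ amounts to deleting the row indexed by $p_1$, and by the explicit shape of $h$ the matrix $h\,\overline{\Proj_{q_1}(\Lambda)}$ differs from $\overline{\Proj_{q_1}(\Lambda)}$ only by adding $\psi$ to the rows whose index $i$ satisfies $(\Proj_{q_1}(\Lambda))(q)_i=1$; comparing with the prescribed dual corner on any such index $i\neq p_1$ reads off $\psi$ uniquely. \textbf{This last verification — that a suitable index $i\neq p_1$ with $(\Proj_{q_1}(\Lambda))(q)_i=1$ is always available, so that $\psi$ is determined rather than merely confined to a coset — is the step I expect to be the main obstacle}; it is handled by choosing the witnessing maximal face appropriately and invoking non-singularity together with Remark~\ref{remark_unchanged_colors}, which controls the coordinates on which $\psi$ must vanish. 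Note that this square lemma is precisely the content of the Proposition in the case where $G(J)$ is a single $4$-cycle, so it must be established directly as above rather than deduced from the Proposition itself.

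\textbf{Propagation.} With the square lemma available, I would finish by strong induction on $k=d(v,w)$ that $\pi(w)=\pi'(w)$. The cases $k=0$ and $k=1$ are exactly the hypotheses. For $k\geq 2$, pick two factors $p,q$ in which $w$ differs from $v$, and let $w'$, $w''$, $w'''$ be the nodes obtained from $w$ by resetting to $v$'s value the $p$-coordinate, the $q$-coordinate, and both coordinates, respectively. These four nodes form a subsquare, and $d(v,w')=d(v,w'')=k-1$ while $d(v,w''')=k-2$, so by the inductive hypothesis $\pi$ and $\pi'$ agree on $w'$, $w''$, $w'''$; the square lemma then forces $\pi(w)=\pi'(w)$. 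Since $G(J)$ is connected, $d(v,w)<\infty$ for every node $w$, whence $\pi=\pi'$.
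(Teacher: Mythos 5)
Your proposal is essentially correct, but there is no proof in the paper to compare it against: the statement is imported verbatim from \cite{CP_wedge_2} and then used as a black box (indeed, the paper deduces the uniqueness of the missing piece of a square \emph{from} this proposition, just before \Cref{lemma_square}). Your two-step architecture --- a ``three corners of a realizable square determine the fourth'' lemma, followed by strong induction on the graph distance from $v$ using the subsquare $\{w,w',w'',w'''\}$ --- is the natural argument, and the propagation step is airtight since graph distance in $G(J)$ is the number of differing coordinates. You are also right to insist on proving the square lemma directly rather than invoking \Cref{lemma_square}, whose uniqueness clause the paper justifies by the very proposition you are proving; that circularity would be easy to fall into. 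On the crux you flag: the required index $i\neq p_1$ with $(\Proj_{q_1}\Lambda)(q)_i=1$ does always exist once the reference facet is chosen to contain $p_1$ and $p_2$, because $\{p_1,q\}$ is a face of $\wed_p(K)$ (every vertex $q$ of $K\setminus\{p\}$ is joined to $p_1$ inside $\partial I \ast (K\setminus\{p\})$), so non-singularity forces $(\Proj_{q_1}\Lambda)(q)$ to be nonzero and distinct from $e_{p_1}$; hence some coordinate other than $p_1$ equals $1$ and $\psi$ is read off exactly, not merely up to a coset. The only residual bookkeeping is that deleting the $p_1$ row of a normalized dual representative $\left(\begin{smallmatrix} M \\ I \end{smallmatrix}\right)$ again yields a normalized representative, which holds precisely because $p_1$ was placed in the identity block; with that convention fixed, your row-by-row comparison is legitimate and the proof goes through.
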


The latter proposition is the cornerstone of the new algorithm for finding realizable puzzles in a constructive way. In fact, when we have a seed PL sphere $L$ on $[m]$ and an $m$-tuple $J$, we just need the image by a realizable puzzle $\pi$ of few nodes of $G(J)$ to get the entire DJ class corresponding to $\pi$.

\subsection{Computation of the square rules}

Let us recall that, from Theorem~5.4 in \cite{CP_wedge_2}, we know a puzzle is realizable if and only if all of its subsquares are realizable. So let us find a condition for the squares to be realizable. More precisely, since we want to describe a constructive algorithm, we consider the smallest step we can make in the construction of the realizable puzzle.
This step is completing a square which would have one of its nodes ``missing''.

An \emph{incomplete} $pq$-square is a square of the form
\begin{center}
\begin{tikzpicture}[baseline =(anchor.base), node distance=3 cm, auto]
\node (lambda1) {$\bar{\lambda}$};
\node (lambda2) [right of = lambda1] {$\bar{\lambda}^{\alpha}$};
\node (lambda3) [below of = lambda1] {$\bar{\lambda}^{\beta}$};
\node (lambda4) [below of = lambda2] {?};
\draw[-] (lambda1) to node {$p,\alpha$} (lambda2);
\draw[-] (lambda1) to node (anchor) [left]  {$q,\beta$} (lambda3);
\draw[-] (lambda3) to node [below] {$p,?$} (lambda4);
\draw[-] (lambda2) to node {$q,?$} (lambda4);
\end{tikzpicture}, with $p\neq q$.
\end{center}

Completing this square requires to
\begin{itemize}
\item Find an element $\bar{\lambda}^\gamma$ such that \begin{tikzpicture}[baseline = (lambda.base),node distance=2cm, auto]
\node (lambda) {$\bar{\lambda}^{\alpha}$};
\node (lambda_prime) [right of = lambda] {$\bar{\lambda}^{\gamma}$};
\draw[-] (lambda) to node {$q$} (lambda_prime);
\end{tikzpicture} and \begin{tikzpicture}[baseline = (lambda.base),node distance=2cm, auto]
\node (lambda) {$\bar{\lambda}^{\beta}$};
\node (lambda_prime) [right of = lambda] {$\bar{\lambda}^{\gamma}$};
\draw[-] (lambda) to node {$p$} (lambda_prime);
\end{tikzpicture} are in the prediagram of $K$. This can be easily done with a computer algorithm by \Cref{lemma_edge}. Such element will be called \emph{a possible missing piece} of the square.
\item Check if a possible missing piece $\bar{\lambda}^{\gamma}$ makes the square \begin{tikzpicture}[baseline =(anchor.base), node distance=1.5 cm, auto]
\node (lambda1) {$\bar{\lambda}$};
\node (lambda2) [right of = lambda1] {$\bar{\lambda}^{\alpha}$};
\node (lambda3) [below of = lambda1] {$\bar{\lambda}^{\beta}$};
\node (lambda4) [below of = lambda2] {$\bar{\lambda}^{\gamma}$};
\draw[-] (lambda1) to node {$p$} (lambda2);
\draw[-] (lambda1) to node (anchor) [left]  {$q$} (lambda3);
\draw[-] (lambda3) to node [below] {$p$} (lambda4);
\draw[-] (lambda2) to node {$q$} (lambda4);
\end{tikzpicture} a realizable square. We will call such $\bar{\lambda}^{\gamma}$  \emph{\underline{the} missing piece} of the square.
\end{itemize}

The missing piece of a square is unique, due to \Cref{CP_nice_prop} applied on the $J$ corresponding to $\wed_{pq}$, which makes the terminology correct.

We will prove the following lemma.

\begin{lemma}\label{lemma_square}
For an incomplete $pq$-square as before we have these three possible cases
\begin{enumerate}
\item If there exists no possible missing piece, then the square cannot be completed;
\item If there exists more than one possible missing pieces for the square, then we can explicitly compute and find the missing piece of the puzzle among them;
\item If there exists only a single possible missing piece for the square, then it is the missing piece of this square and we can once more compute it explicitly.
\end{enumerate}
\end{lemma}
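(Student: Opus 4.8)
The plan is to reduce everything to \Cref{lemma_edge} and the uniqueness statement in \Cref{CP_nice_prop}. Case (1) is immediate from the definitions: a realizable square in particular has all four of its edges in the prediagram, so if no $\bar{\lambda}^\gamma$ completes both missing edges of the incomplete $pq$-square, there is certainly no realizable square with the prescribed three corners, hence the square cannot be completed. For cases (2) and (3) the real content is that among the (one or several) possible missing pieces, exactly one gives a realizable square, and that we can pin it down by an explicit linear-algebraic computation rather than by checking the full square rule.

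The key idea I would use is the commutative-group structure of the edge transformations recorded in \Cref{remark_unchanged_colors}. Writing the three given edges as $\bar{\lambda}^\alpha = h_\alpha \bar{\lambda}$ with $h_\alpha = (p,\lambda,\alpha) \in H_p(\bar{\lambda})$ and $\bar{\lambda}^\beta = h_\beta \bar{\lambda}$ with $h_\beta = (q,\lambda,\beta)\in H_q(\bar{\lambda})$, I would argue that the unique missing piece must be $\bar{\lambda}^\gamma = h_\beta h_\alpha \bar{\lambda} = h_\alpha h_\beta \bar{\lambda}$. Concretely: applying \Cref{lemma_edge} along the bottom edge $\bar{\lambda}^\beta \xrightarrow{p} \bar{\lambda}^\gamma$ requires a row vector $\psi$ with $\bar{\lambda}^\gamma = (p,\lambda^\beta,\psi)\,\bar{\lambda}^\beta$, and by \Cref{remark_unchanged_colors}(3), $H_p(\bar{\lambda}^\beta)=H_p(\bar{\lambda})$, so this $(p,\lambda^\beta,\psi)$ is really $(p,\lambda,\psi)$ acting; similarly the right edge forces $\bar{\lambda}^\gamma = (q,\lambda,\psi')\,\bar{\lambda}^\alpha$. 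Expanding both expressions back to $\bar{\lambda}$ and using commutativity of $H_p(\bar{\lambda})$ with $H_q(\bar{\lambda})$ on the relevant component (the $p$-edge uses only coordinates where $\lambda(p)_j=1$, the $q$-edge only coordinates where $\lambda(q)_j=1$, and the two row-vector ``directions'' $\alpha,\beta$ do not interfere because of the basic-form structure and the vanishing $\alpha(q)=\beta(p)=0$ coming from \Cref{remark_unchanged_colors}(2)), one gets $\psi=\alpha$ and $\psi'=\beta$, hence $\bar{\lambda}^\gamma = h_\alpha h_\beta \bar{\lambda}$ is forced. This candidate is therefore the only element of $\CM(K)$ that can possibly close all four edges; that it actually yields a realizable square — not merely a square all of whose edges lie in the prediagram — follows from \Cref{CP_nice_prop} applied to $J$ encoding $\wed_{pq}$: the data $(\bar{\lambda},\bar{\lambda}^\alpha,\bar{\lambda}^\beta)$ on a node and its two $G(J)$-neighbours extends to at most one realizable puzzle, and the corner $h_\alpha h_\beta \bar{\lambda}$ is exactly the value that puzzle must take at the fourth node. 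In case (3), the single possible missing piece is then automatically this $h_\alpha h_\beta\bar{\lambda}$, and in case (2) we simply compute $h_\alpha h_\beta\bar{\lambda}$ and identify it among the list of possible missing pieces.

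The main obstacle I expect is the commutativity/compatibility bookkeeping: one has to be careful that $h_\alpha$ and $h_\beta$ really commute and that ``transporting'' the transformation $h_\alpha \in H_p(\bar{\lambda})$ to $H_p(\bar{\lambda}^\beta)$ leaves the row vector $\alpha$ unchanged — this is where \Cref{remark_unchanged_colors}(1)--(3) do the work, but the matrices $h=(p,\lambda,\psi)$ from \eqref{edge1} versus \eqref{edge2} have slightly different shapes depending on whether $p\le n$ or $p>n$, and one must check that the argument is uniform. A clean way to sidestep case-splitting is to phrase the whole thing using the matrix form $h=(p,\lambda_1,\psi)$ of \Cref{lemma_edge} throughout (which already unifies both cases), verify directly that two such matrices for distinct vertices $p\neq q$ commute as elements of $\GL(m,\Z_2)$, and note that $h_\beta h_\alpha \bar{\lambda}$ has the required basic form with respect to both $\{p_1,p_2\}$ and $\{q_1,q_2\}$, so it genuinely represents a characteristic map over $\wed_{pq}(K)$. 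Once the candidate is shown to be forced and realizable, the three bulleted conclusions are just a restatement, and the explicit formula $\bar{\lambda}^\gamma = h_\alpha h_\beta\bar{\lambda}$ is the promised ``explicit computation''.
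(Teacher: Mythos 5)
Your case (1) matches the paper, and the overall idea of producing an explicit candidate for the fourth corner is the right one, but the core of your argument has two genuine gaps. First, the claim that the edge conditions alone force $\bar{\lambda}^\gamma = h_\alpha h_\beta\bar{\lambda}$ cannot be correct, because it would make the set of possible missing pieces always a singleton and thereby erase case (2) of the lemma. The possible missing pieces are, by definition, all $\bar{\lambda}^\gamma$ closing the two missing edges in the \emph{prediagram}; when $\lambda(p)=\lambda(q)$ one has $H_p(\bar{\lambda})=H_q(\bar{\lambda})$ by \Cref{remark_unchanged_colors}(2), the $p$- and $q$-adjacency components of $\bar{\lambda}$ coincide, and \emph{every} element of that common component closes both edges. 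So the edge conditions genuinely fail to pin down $\bar{\lambda}^\gamma$, and the square condition (membership in $\cS$, i.e.\ existence of a characteristic map over $\wed_{p,q}(K)$) is what does the selecting; your freeness/commutativity computation, which concludes $\psi=\alpha$ and $\psi'=\beta$, must therefore contain an error — concretely, the products $h_1h_\beta$ and $h_2h_\alpha$ live in different mixed products of groups, so freeness of a single $H_p$-action does not apply. Second, your ``transport'' step is unjustified: \Cref{remark_unchanged_colors}(3) gives $H_q(\bar{\lambda}_1)=H_q(\bar{\lambda}_2)$ only when $\bar{\lambda}_1,\bar{\lambda}_2$ are $q$-adjacent, whereas $\bar{\lambda}^\alpha$ is $p$-adjacent to $\bar{\lambda}$. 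When $q>n$ the $p$-edge can change the color $\lambda(q)$ (precisely when $\phi_{q-n}\neq 0$), so the matrix for the $q$-edge out of $\bar{\lambda}^\alpha$ is $(q,\lambda^\alpha,\psi)$, not $(q,\lambda,\psi)$, and the true missing piece acquires the extra term $\phi_{q-n}\,\psi$ in the $p$-th row (this is the $\epsilon=\phi+\phi_1\psi$ computed in the paper). In that case your closed formula $h_\alpha h_\beta\bar{\lambda}$ is not even a possible missing piece.

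There is also a third, independent gap: you derive realizability of the completed square from \Cref{CP_nice_prop}, but that proposition is a pure uniqueness statement (``at most one realizable puzzle extends the corner data''); it cannot produce the existence of a realizable square. Establishing that the candidate actually lies in $\cS$ requires exhibiting a characteristic map over $\wed_{p,q}(K)$, which is what the bulk of the paper's proof does: it forms the two $p$-wedges $\lambda\wedge_p\lambda^\alpha$ and $\lambda^\beta\wedge_p\lambda^\gamma$ explicitly and checks via \Cref{lemma_edge} that they are joined by a $q$-edge in the prediagram of $\wed_p(K)$, hence define an element of $\CM(\wed_{p,q}(K))$. Without this existence step, and with the case analysis on whether $\lambda(p)=\lambda(q)$ and whether $p,q$ lie in a common facet (which governs the exact form of the composite transformation), the argument is incomplete.
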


\begin{proof} (1) is immediate by the definitions we used.

For proving (2), we consider that there exists $\bar{\lambda}^\gamma$ and $\bar{\lambda}^{\gamma\prime}$ two possible missing pieces of this square. We have
\begin{center}

\begin{tikzpicture}[baseline =(anchor.base), node distance=3 cm, auto]
\node (lambda1) {$\bar{\lambda}$};
\node (lambda2) [right of = lambda1] {$\bar{\lambda}^{\alpha}$};
\node (lambda3) [below of = lambda1] {$\bar{\lambda}^{\beta}$};
\node (lambda4) [below of = lambda2] {$\bar{\lambda}^\gamma$};
\node (lambda5) [below right of  = lambda4] {$\bar{\lambda}^{\gamma\prime}$};
\draw[-] (lambda1) to node {$p,\alpha_p$} (lambda2);
\draw[-] (lambda1) to node (anchor) [left]  {$q,\beta_q$} (lambda3);
\draw[-] (lambda3) to node {$p, \beta_p$} (lambda4);
\draw[-] (lambda2) to node [left] {$q, \alpha_q$} (lambda4);
\draw[-,bend left] (lambda2) to node {$q, \alpha_q^\prime$} (lambda5);
\draw[-,bend right] (lambda3) to node (anchor) [below]  {$p, \beta_p^\prime$} (lambda5);
\draw[-,dashed] (lambda4) to node [below left] {$p, \tilde{\beta}$} node [above right] {$q, \tilde{\alpha}$} (lambda5);
\end{tikzpicture},
\end{center}
with $\alpha_p\in H_p(\bar{\lambda}),\beta_q\in H_q(\bar{\lambda})$ and $\alpha_q,\alpha_q^\prime\in H_q(\bar{\lambda}^\alpha),\beta_p,\beta_p^\prime\in H_p(\bar{\lambda}^\beta)$ and
 the dashed line being found as follows
\begin{itemize}
\item $\bar{\lambda}^{\gamma\prime}= \alpha_q^\prime \bar{\lambda}^\alpha = \alpha_q^\prime(\alpha_q \bar{\lambda}^\gamma) = \tilde{\alpha} \bar{\lambda}^\gamma$, so the edge \begin{tikzpicture}[baseline = (lambda.base),node distance=1.5cm, auto]
\node (lambda) {$\bar{\lambda}^{\gamma}$};
\node (lambda_prime) [right of = lambda] {$\bar{\lambda}^{\gamma\prime}$};
\draw[-] (lambda) to node {$q, \tilde{\alpha}$} (lambda_prime);
\end{tikzpicture}
 is in the prediagram of $K$ since $\tilde{\alpha}\in H_q(\lambda^{\gamma})$;
\item Similarly, \begin{tikzpicture}[baseline = (lambda.base),node distance=1.5cm, auto]\node (lambda) {$\bar{\lambda}^{\gamma}$};
\node (lambda_prime) [right of = lambda] {$\bar{\lambda}^{\gamma\prime}$};
\draw[-] (lambda) to node {$p, \tilde{\beta}$} (lambda_prime);
\end{tikzpicture} is in the prediagram of $K$ since $\tilde{\beta} = \beta_p^\prime \beta_p \in H_p(\lambda^{\gamma})$.
\end{itemize}

We write $\tilde{\alpha} = (q,\lambda^\gamma,\phi)$ and $\tilde{\beta} = (p,\lambda^\gamma,\psi)$.

Since by \Cref{constr_puzzle} a generator between two dual DJ classes of $K$ is unique, we have $\tilde{\alpha} = \tilde{\beta}$ and the equality between such objects is given by $\phi = \psi$ and the color of $p$, respectively $q$, in the $p$-adjacent, respectively $q$-adjacent component of $\bar{\lambda}^\gamma$ remains unchanged (by \Cref{remark_unchanged_colors} (2)). Thus since $\bar{\lambda}$ is in both of these connected components, this leads to $\lambda(p) = \lambda(q)$.

Hence the only case there would be several possible missing pieces is when $\lambda(p) = \lambda(q)$.
In that case, we can easily find which dual DJ class is the missing piece (i.e. creates a realizable square). It is exactly given by $\bar{\lambda}^\gamma := \alpha_p \beta_q \bar{\lambda}$. The corresponding square is
\begin{center}
\begin{tikzpicture}[baseline =(anchor.base), node distance=3 cm, auto]
\node (lambda1) {$\bar{\lambda}$};
\node (lambda2) [right of = lambda1] {$\bar{\lambda}^\alpha$};
\node (lambda3) [below of = lambda1] {$\bar{\lambda}^\beta$};
\node (lambda4) [below of = lambda2] {$\bar{\lambda}^\gamma$};
\draw[-] (lambda1) to node {$p,\alpha_p$} (lambda2);
\draw[-] (lambda1) to node (anchor) [left]  {$q,\beta_q$} (lambda3);
\draw[-] (lambda3) to node [below] {$p, \alpha_p$} (lambda4);
\draw[-] (lambda2) to node {$q,\beta_q$} (lambda4);
\end{tikzpicture}.
\end{center}
Let us show that it is a realizable square. We write $\alpha_p = (p,\lambda^\gamma,\phi)$ and $\beta_q = (q,\lambda^\gamma,\psi)$.
Without loss of generality, we can suppose that $p=1$ and $q=n+1$. We have

$$\bar{\lambda} = \left(\begin{array}{c}\bar{\lambda}(1) \\ \bar{\lambda}(\star) \\ I_{m-n}\end{array}\right),
\bar{\lambda}^\alpha = \left(\begin{array}{c}\bar{\lambda}(1) +\phi\\ \bar{\lambda}(\star) \\ I_{m-n}\end{array}\right),
\bar{\lambda}^\beta = \left(\begin{array}{c}\bar{\lambda}(1)+\psi \\ \bar{\lambda}(\star) \\ I_{m-n}\end{array}\right), \text{ and }
\bar{\lambda}^\gamma = \left(\begin{array}{c}\bar{\lambda}(1)+\psi+\phi \\ \bar{\lambda}(\star) \\ I_{m-n}\end{array}\right).$$
Thus
$$\lambda \wedge_{1} \lambda^\alpha = \begin{blockarray}{ccccc}
1_1 & 1_2 & 2,\ldots,n & q & q+1,\ldots,m\\
\begin{block}{(cc|c|c|c)}
1&0&0&1&\bar{\lambda}(1) + \phi \\
0&1&0&1&\bar{\lambda}(1) \\
\mathbf{0}&\mathbf{0}&I_{n-2} &\mathbf{0}&\bar{\lambda}(\star)\\
\end{block}\\
\end{blockarray},
$$
and
$$\lambda^\beta \wedge_{1} \lambda^\gamma = \begin{blockarray}{ccccc}
1_1 & 1_2 & 2,\ldots,n & q & q+1,\ldots,m \\
\begin{block}{(cc|c|c|c)}
1&0&0&1&\bar{\lambda}(1)+\psi + \phi \\
0&1&0&1&\bar{\lambda}(1)+\psi \\
\mathbf{0}&\mathbf{0}&I_{n-2} &\mathbf{0}& \bar{\lambda}(\star)\\
\end{block}
\end{blockarray}.
$$

By \Cref{lemma_edge}, the edge \begin{tikzpicture}[baseline = (lambda.base),node distance=2.5cm, auto]
\node (lambda) {$\lambda \wedge_{1} \lambda^\alpha$};
\node (lambda_prime) [right of = lambda] {$\lambda^\beta \wedge_{1} \lambda^\gamma$};
\draw[-] (lambda) to node {$q,\Delta$} (lambda_prime);
\end{tikzpicture} with $\Delta = (q,\lambda^\beta \wedge_{1} \lambda^\gamma,\psi)$ is in the prediagram of $\wed_p(K)$ and thus represents a DJ class of $\wed_q(\wed_p(K)) = \wed_{p,q}(K)$ and thus the square is realizable which proves (2).

Now, we need to prove (3). So let us consider that there exists only a unique possible missing piece. We have $\lambda(p)\neq\lambda(q)$ otherwise we can just look at the above calculations and easily compute the missing piece.

We then have two cases.

Firstly, if $p$ and $q$ are together in a maximal face of $K$. By relabeling and without loss of generality we can consider that $p=1$ and $q=2$. We consider the following $(1,2)$-square

\begin{center}
\begin{tikzpicture}[baseline =(anchor.base), node distance=3 cm, auto]
\node (lambda1) {$\bar{\lambda}$};
\node (lambda2) [right of = lambda1] {$\bar{\lambda}^\alpha$};
\node (lambda3) [below of = lambda1] {$\bar{\lambda}^\beta$};
\node (lambda4) [below of = lambda2] {$\bar{\lambda}^\gamma$};
\draw[-] (lambda1) to node {$1,\alpha_1$} (lambda2);
\draw[-] (lambda1) to node (anchor) [left]  {$2,\beta_2$} (lambda3);
\draw[-] (lambda3) to node [below] {$1, \alpha_1$} (lambda4);
\draw[-] (lambda2) to node {$2,\beta_2$} (lambda4);
\end{tikzpicture}.
\end{center}
Let us show that it is realizable. We write $\alpha_1 = (1,\bar{\lambda},\phi)$ and $\beta_2 = (2,\bar{\lambda},\psi)$, we can then describe the dual DJ classes
$$\bar{\lambda} = \left(\begin{array}{c}\bar{\lambda}(1) \\ \bar{\lambda}(2)\\ \bar{\lambda}(\star) \\ I_{m-n}\end{array}\right),
\bar{\lambda}^\alpha = \left(\begin{array}{c}\bar{\lambda}(1) +\phi\\ \bar{\lambda}(2) \\\bar{\lambda}(\star) \\ I_{m-n}\end{array}\right),
\bar{\lambda}^\beta = \left(\begin{array}{c}\bar{\lambda}(1) \\ \bar{\lambda}(2) +\psi \\\bar{\lambda}(\star) \\ I_{m-n}\end{array}\right), \text{ and }
\bar{\lambda}^\gamma = \left(\begin{array}{c}\bar{\lambda}(1)+\phi \\ \bar{\lambda}(2)+\psi\\ \bar{\lambda}(\star) \\ I_{m-n}\end{array}\right).$$

The $1$-wedges of this square are as follows

$$\lambda \wedge_{1} \lambda^\alpha = \begin{blockarray}{ccccc}
1_1 & 1_2 & 2& 3,\ldots,n&n+1,\ldots,m \\
\begin{block}{(cc|c|c|c)}
1&0&0&\mathbf{0}&\bar{\lambda}(1) + \phi \\
0&1&0&\mathbf{0}&\bar{\lambda}(1) \\
0&0&1&\mathbf{0}&\bar{\lambda}(2) \\
\mathbf{0}&\mathbf{0}&\mathbf{0}&I_{n-3}&\bar{\lambda}(\star)\\
\end{block}
\end{blockarray},$$
and
$$\lambda^\beta \wedge_{1} \lambda^\gamma = \begin{blockarray}{ccccc}
1_1 & 1_2 & 2& 3,\ldots,n&n+1,\ldots,m \\
\begin{block}{(cc|c|c|c)}
1&0&0&\mathbf{0}&\bar{\lambda}(1) + \phi \\
0&1&0&\mathbf{0}&\bar{\lambda}(1) \\
0&0&1&\mathbf{0}&\bar{\lambda}(2) + \psi \\
\mathbf{0}&\mathbf{0}&\mathbf{0}&I_{n-3}&\bar{\lambda}(\star)\\
\end{block}
\end{blockarray},$$

Again, \Cref{lemma_edge} gives that the edge \begin{tikzpicture}[baseline = (lambda.base),node distance=3cm, auto]
\node (lambda) {$\lambda \wedge_{1} \lambda^\alpha$};
\node (lambda_prime) [right of = lambda] {$\lambda^\beta \wedge_{1} \lambda^\gamma$};
\draw[-] (lambda) to node {$2,\Delta$} (lambda_prime);
\end{tikzpicture}, with $\Delta=(2,\lambda \wedge_{1} \lambda^\alpha,\psi)$, is in the prediagram of $\wed_1(K)$ and thus represents a DJ class of $\wed_2(\wed_1(K)) = \wed_{1,2}(K)$, namely, the square is realizable.

Secondly, if $p$ and $q$ appear in none of the maximal faces. By relabeling, we can suppose that $p=1$ and $q=n+1$.
Hence $\lambda(p) = e_1$ and $\lambda(q)$ is an arbitrary vector different from $e_1$ (We again forget this case since we have computed it before). We consider the following square.
\begin{center}
\begin{tikzpicture}[baseline =(anchor.base), node distance=3 cm, auto]
\node (lambda1) {$\bar{\lambda}$};
\node (lambda2) [right of = lambda1] {$\bar{\lambda}^\alpha$};
\node (lambda3) [below of = lambda1] {$\bar{\lambda}^\beta$};
\node (lambda4) [below of = lambda2] {$\bar{\lambda}^\gamma$};
\draw[-] (lambda1) to node {$1,(1,\lambda,\phi)$} (lambda2);
\draw[-] (lambda1) to node (anchor) [left]  {$n+1,(n+1,\lambda,\psi)$} (lambda3);
\draw[-] (lambda3) to node [below] {$1,(1,\lambda^\beta,\epsilon)$} (lambda4);
\draw[-] (lambda2) to node {$n+1,(n+1,\lambda^\alpha,\delta)$} (lambda4);
\end{tikzpicture}.
\end{center}

Recall that $\psi_{1}=\delta_{1}=0$ (see \eqref{edge2}), so inside this square, the only coordinate of $\lambda(n+1)$ which can change is its first coordinate.

Since $\lambda(n+1)\not\in \{\mathbf{0},e_1\}$, we must have an index $k>1$ such that $\lambda(n+1)_k=1$ (of course $n>1$), for simplifying the notations, we suppose that $k=2$, then $(n+1,\lambda,\psi)$ will modify at least $\bar{\lambda}(k)$.We first get
$$\bar{\lambda}^\alpha = \left(\begin{array}{c}\bar{\lambda}(1) +\phi \\\bar{\lambda}(k) \\ \bar{\lambda}(\star)\\ I_{m-n}\end{array}\right)\text{ and } \bar{\lambda}^\beta = \left(\begin{array}{c}\bar{\lambda}(1) + \lambda(n+1)_1\times\psi \\\bar{\lambda}(k) + \psi \\ \bar{\lambda}(\star) + \lambda(n+1)_\star\times\psi\\
I_{m-n}\end{array}\right).$$
From the two remaining edges, we obtain two forms for $\bar{\lambda}^\gamma$ as follows.
$$\bar{\lambda}^\gamma = \left(\begin{array}{c}\bar{\lambda}(1) + \phi + [\lambda(n+1)_1 + \phi_{1}]\times\delta \\\bar{\lambda}(k) + \delta \\ \bar{\lambda}(\star) + \lambda(n+1)_\star\times\delta\\ I_{m-n}\end{array}\right) \text{ and } \bar{\lambda}^\gamma = \left(\begin{array}{c}\bar{\lambda}(1) + \lambda(n+1)_1\times\psi+ \epsilon \\\bar{\lambda}(k) + \psi \\ \bar{\lambda}(\star) + \lambda(n+1)_\star\times\psi\\ I_{m-n}\end{array}\right).$$
Hence we get from the $k$th row that $\delta=\psi$. The first row provides the following explicit formula $\epsilon =  \lambda(n+1)_1\times\psi + \phi + [\lambda(n+1)_1 + \phi_{1}]\times\psi = \phi + \phi_1 \times\psi$, but will not be used here.
The first $1$-wedge of the square is
$$\lambda \wedge_{1} \lambda^\alpha = \begin{blockarray}{ccccc}
1_1 & 1_2 & 2,\ldots,n & n+1 & n+j=n+2,\ldots,m \\ \begin{block}{(cc|c|c|c)}
1&0&0&\lambda(n+1)_1 + \phi_{1}&\bar{\lambda}(1)_j + \phi_j \\
0&1&0&\lambda(n+1)_1&\bar{\lambda}(1)_j \\
\mathbf{0}&\mathbf{0}&I_{n-2} &\lambda(n+1)_\star&\bar{\lambda}(\star)_j\\
\end{block}
\end{blockarray}.
$$
The first coordinate of $\lambda^\gamma(n+1)$ is
\begin{align*}
\lambda^\gamma(n+1)_1 = \bar{\lambda}^\gamma(1)_1 &= [\bar{\lambda}(1)+\phi + (\lambda(n+1)_1+\phi_{1})\times\psi]_{1}\\ &= \bar{\lambda}(1)_1+\phi_{1} + (\lambda(n+1)_1+\phi_{1})\times\underbrace{\psi_{1}}_{=0}\\ &= \lambda(n+1)_1+\phi_{1}.
\end{align*}
Furthermore the other coordinates $\lambda^\gamma(n+1)_i$, for $i=2,\ldots,n$, are
\begin{align*}
\lambda^\gamma(n+1)_i = \bar{\lambda}^\gamma(i)_1 &= [\bar{\lambda}(i) + (\lambda(n+1)_1+\phi_{1})\times\psi]_{1}\\ &= \bar{\lambda}(i)_1 + (\lambda(n+1)_1+\phi_{1})\times\underbrace{\psi_{1}}_{=0}\\ &= \lambda(n+1)_i.
\end{align*}
From this we obtain the second $1$-wedge as follows
$$\lambda^\beta \wedge_{1} \lambda^\gamma = \begin{blockarray}{ccccc}
1_1 & 1_2 & 2,\ldots,n & n+1 & n+j=n+2,\ldots,m \\ \begin{block}{(cc|c|c|c)}
1&0&0&\lambda(n+1)_1 + \phi_{1} &\bar{\lambda}(1)_j+\phi_j + (\lambda(n+1)_1+\phi_{1})\times\psi_j \\
0&1&0&\lambda(n+1)_1&\bar{\lambda}(1)_j + \lambda(n+1)_1\times\psi_j \\
\mathbf{0}&\mathbf{0}&I_{n-2} &\lambda(n+1)_\star& \bar{\lambda}(\star)_j + \lambda(n+1)_\star\times\psi_j\\
\end{block}
\end{blockarray}.
$$
One can see by \Cref{lemma_edge} that the edge \begin{tikzpicture}[baseline = (lambda.base),node distance=3cm, auto]
\node (lambda) {$\lambda \wedge_{1} \lambda^\alpha$};
\node (lambda_prime) [right of = lambda] {$\lambda^\beta \wedge_{1} \lambda^\gamma$};
\draw[-] (lambda) to node {$n+1,\Delta$} (lambda_prime);
\end{tikzpicture}, with $\Delta=(n+1,\lambda \wedge_{1} \lambda^\alpha,\psi)$, is in the prediagram of $\wed_1(K)$ and thus provides a DJ class over $\wed_{1,n+1}(K)$ as desired.

Thus, in every cases we found that if there exists a possible missing piece for an incomplete $pq$-square, then it leads to the unique missing piece of the square.
\end{proof}

\begin{remark}This proof actually gives us the explicit and only form that realizable squares can take. Thus, while \Cref{lemma_edge} provides the set of edges $\cE$ of the diagram of $K$, \Cref{lemma_square} gives us the set of realizable squares $\cS$ of $D(K)$, namely all the DJ classes over $K$ after two consecutive wedges. More precisely, it gives us a map from the set of incomplete $pq$-squares onto $\CM(\wed_{p,q}(K))\cup\{\emptyset\}$ which gives either the missing piece of an incomplete $pq$-square if it can be completed or $\emptyset$ otherwise.
\end{remark}

As desired, \Cref{lemma_edge} and \Cref{lemma_square} provides us all the  data of the diagram of $K$. So we have now the basic steps of an algorithm which will enumerate all the realizable puzzles over a wedged seed PL sphere $L(J)$.

\subsection{Description of the new puzzle algorithm}

Let us recall that a node $v$ in the graph $G(J)$ can be represented by an $m$-tuple
$$(v_1,v_2,\ldots,v_m),$$
with $1\leq v_1\leq j_1,1\leq v_2\leq j_2,\ldots,1\leq v_m\leq j_m$.
We define the depth of a node $v$ in the graph $D(J)$ as $d(v) = |\{v_i\neq 1\vert i=1,\ldots,m\}|\leq m$.
For example we have $d((1,\ldots,1)) = 0$.
In addition, we order the nodes of $G(J)$ having the same depth by lexicographic order.
The depth $d$ together with the lexicographic order gives a total order $\preceq$ on the nodes of $G(J)$.

We will define by $D^\prime(L)_p$ the prediagram of $L$ which is restricted to the edges colored with $p$.
We will also denote by $D^\prime(L)_p(\lambda)$ the connected component of $\lambda$ in $D^\prime(L)_p$.
Since the action is free and transitive on a connected component, then $D^\prime(L)_p(\lambda)$ is a complete graph.

We then have the following theorem.

\begin{theorem}
Given a wedged seed PL sphere $L(J)$ and its set of DJ classes $\CM(L)$, there exists an algorithm for finding the DJ classes set $\CM(L(J))$ whose complexity only depends on the size of $\CM(L)$, the length of $J$, and the Picard number of $L$.
The algorithm is described in \Cref{new_puzz}.
\end{theorem}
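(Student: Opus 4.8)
The plan is to prove the theorem by exhibiting \Cref{new_puzz} explicitly and then (i) checking that it outputs $\CM(L(J))$ via the one-to-one correspondence $\CM(L(J))\leftrightarrow\cR\cP(L,J)$ of \cite{CP_wedge_2}, and (ii) bounding its running time. Write $p=\Pic(L)=m-n$. The algorithm proceeds in three stages. First, I would build the prediagram $D^\prime(L)$ from $\CM(L)$: by \Cref{lemma_edge} two dual classes are $p$-adjacent exactly when one is obtained from the other by left multiplication by a matrix $h=(p,\lambda_1,\psi)$, so for each $\bar\lambda_0\in\DCM(L)$ and each $p\in[m]$ I enumerate the commutative group $H_p(\bar\lambda_0)$ of \Cref{remark_unchanged_colors} and record its orbit. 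Second, I would enumerate \emph{crosses}: a value $\pi$ at the root node $(1,\dots,1)$ of $G(J)$ lying in $\CM(L)$, together with, for each $p$ with $j_p\ge 2$, a choice of the $j_p-1$ values of $\pi$ on the $p$-neighbours of the root, each taken in the connected component $D^\prime(L)_p\bigl(\pi(1,\dots,1)\bigr)$ (which is a complete graph, so every such choice makes the cross edges admissible). Third, for each cross I would \emph{propagate}: processing the nodes of $G(J)$ in increasing depth, a depth-$d$ node $w$ with $d\ge 2$ is the missing corner of a $pq$-square whose other three corners have depth $\le d-1$ (choose coordinates $p,q$ with $w_p,w_q\ne 1$ and lower each to $1$), and \Cref{lemma_square} either returns the unique admissible value of $\pi(w)$ or reports that the square cannot be completed; I discard the cross if any completion fails or if two squares through one node disagree, and otherwise record the resulting map.

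For correctness I would argue both inclusions. If $\pi$ is a realizable puzzle, then all vertices of each factor $\Delta^{j_p-1}$ are pairwise $p$-adjacent, so the restriction of $\pi$ to the root and its neighbours is one of the enumerated crosses; an induction on depth, using that $\pi$ restricts to a realizable subsquare on every square used in the propagation and that a realizable subsquare has a unique missing piece (\Cref{CP_nice_prop}) computable by \Cref{lemma_square}, shows that the propagation reproduces $\pi$ with no conflict, so $\pi$ is recorded. Conversely, if a cross survives propagation I would check that the resulting map $\pi\colon\V(G(J))\to\CM(L)$ is an edge-colouring preserving pseudo-graph homomorphism to $D^\prime(L)$ all of whose subsquares are realizable; then \Cref{CP_nice_prop} gives that distinct surviving crosses yield distinct puzzles and that each puzzle has exactly one cross, so the recorded maps biject with $\cR\cP(L,J)$, hence with $\CM(L(J))$.

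For the complexity I would observe that via $\psi\mapsto(p,\lambda_0,\psi)$ each $H_p(\bar\lambda_0)$ is a subgroup of $(\Z_2^{m-n},+)$ — the relevant matrices compose by adding the $\psi$'s, because their off-diagonal blocks multiply to $0$ — so each component $D^\prime(L)_p(\lambda)$ has size at most $2^{m-n}=2^{p}$; hence the number of crosses is at most $|\CM(L)|\cdot\prod_k 2^{p(j_k-1)}=|\CM(L)|\cdot 2^{p\,\len(J)}$. Moreover $\prod_k j_k=\prod_k\bigl(1+(j_k-1)\bigr)\le\prod_k 2^{\,j_k-1}=2^{\len(J)}$, so $G(J)$ has at most $2^{\len(J)}$ nodes and correspondingly $\mathrm{poly}(2^{\len(J)},m)$ edges and subsquares; building $D^\prime(L)$ costs $\mathrm{poly}\bigl(|\CM(L)|,2^{p},m\bigr)$, running one propagation costs $\mathrm{poly}(2^{\len(J)},m)$, each square completion being a bounded number of $\Z_2$-operations on matrices of size $\le m$. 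If $\CM(L)=\emptyset$ the output is empty; otherwise $n\le 2^{p}-p$ by \cite{CP_wedge_2}, so $m=n+p$ and $m-n=p$ are functions of $p$ alone, and the whole running time is bounded by a function of $|\CM(L)|$, $\len(J)$ and $\Pic(L)=p$.

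The step I expect to be the main obstacle is the converse part of correctness: showing that a cross which survives the depth-ordered propagation really does define a realizable puzzle, i.e. that \emph{every} edge and \emph{every} subsquare of $G(J)$, not just those explicitly resolved, lands in $\cE$ and $\cS$. The delicate point is that some subsquares of $G(J)$ have two ``deep'' corners, each produced from a different smaller square, so their admissibility is not resolved directly; I expect to handle this by a combinatorial covering argument showing each such edge or square is a side or face of some square whose completion is certified realizable by \Cref{lemma_square}, together with \Cref{CP_nice_prop} to pin down uniqueness. The prediagram construction and the complexity count are then essentially bookkeeping once the bound $\prod_k j_k\le 2^{\len(J)}$ and the estimate $|D^\prime(L)_p(\lambda)|\le 2^{p}$ are in hand.
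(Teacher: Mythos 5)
Your proposal is correct and follows essentially the same route as the paper: enumerate the depth-$\le 1$ colourings (the paper's ``determining corners'', your ``crosses''), propagate to deeper nodes by completing squares via \Cref{lemma_square}, and justify correctness by the observation that every subsquare of $G(J)$ gets certified when its $\prec$-largest corner is processed (the paper packages this as a loop invariant on $G(J)_{\prec v}$), with the complexity bound $|\CM(L)|\cdot 2^{\Pic(L)\len(J)}$ on surviving crosses coming from the same estimate $|D^\prime(L)_p(\lambda)|\le 2^{\Pic(L)}$. Your additional remarks that $\prod_k j_k\le 2^{\len(J)}$ and that $m\le 2^{\Pic(L)}$ when $\CM(L)\neq\emptyset$ are welcome refinements that make the dependence on only $|\CM(L)|$, $\len(J)$ and $\Pic(L)$ fully explicit, and the ``obstacle'' you flag (squares with two deep corners) is in fact resolved directly by the algorithm, since it iterates over all pairs of already-coloured neighbours of the current node.
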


\begin{algorithm}\label{new_puzz}
 \begin{itemize}
    \item[ ]
    \item \textbf{Input:} $L$ a seed PL sphere on $[m]$ with $\dim(L)=n-1$, $J \in \Z_{>0}^m$ and $\CM(L)$ the set of the DJ classes over $L$.
    \item \textbf{Output:} The set $\cR\cP(L,J)$ of all realizable puzzles $\pi:G(J)\to D(L)$ .
    \item \textbf{Initialization:}
        \begin{itemize}
            \item[] $D^\prime(L)\leftarrow$ Prediagram of $L$ using \Cref{lemma_edge}.
            \item[] $\pi\leftarrow$ The empty map.
            \item[] $\cR\cP(L,J)\leftarrow \emptyset$.
            \item[] $d \leftarrow 0$.
            \item[] $v \leftarrow (1,\ldots,1)$.
        \end{itemize}
    \item \textbf{Procedure:}
        \begin{enumerate}
            \item If $d>d(J)$ add $\pi$ to $\cR\cP(L,J)$;
            \item Else
            \begin{enumerate}
            \item  $\Lambda \leftarrow \CM(L)$
            \item  For every node $\nu\prec v$, if there is an edge \begin{tikzpicture}[baseline = (lambda.base),node distance=1.5cm, auto]
\node (lambda) {$\nu$};
\node (lambda_prime) [right of = lambda] {$v$};
\draw[-] (lambda) to node {$p$} (lambda_prime);
\end{tikzpicture} in $G(J)$ then
\begin{enumerate}
\item $\Lambda \leftarrow \Lambda \cap D^\prime(L)_p(\pi(\nu))$.
\item For every node $\nu \prec \mu \prec v$, if there is an edge \begin{tikzpicture}[baseline = (lambda.base),node distance=1.5cm, auto]
\node (lambda) {$\mu$};
\node (lambda_prime) [right of = lambda] {$v$};
\draw[-] (lambda) to node {$q$} (lambda_prime);
\end{tikzpicture} in $G(J)$.
\begin{itemize}
\item Find the unique node $u$ completing the square \begin{tikzpicture}[baseline =(anchor.base), node distance=1.5 cm, auto]
\node (lambda1) {$u$};
\node (lambda2) [right of = lambda1] {$\mu$};
\node (lambda3) [below of = lambda1] {$\nu$};
\node (lambda4) [below of = lambda2] {$v$};
\draw[-] (lambda1) to node {$p$} (lambda2);
\draw[-] (lambda1) to node (anchor) [left]  {$q$} (lambda3);
\draw[-] (lambda3) to node [below] {$p$} (lambda4);
\draw[-] (lambda2) to node {$q$} (lambda4);
\end{tikzpicture}, with $u\prec \nu$ and $d(u)<d(v)$.
\item $(\star)$ \label{square_verif}
$\Lambda = \Lambda \cap \text{the missing piece of}$ \begin{tikzpicture}[baseline =(anchor.base), node distance=1.5 cm, auto]
\node (lambda1) {$\pi(u)$};
\node (lambda2) [right of = lambda1] {$\pi(\mu)$};
\node (lambda3) [below of = lambda1] {$\pi(\nu)$};
\node (lambda4) [below of = lambda2] {$?$};
\draw[-] (lambda1) to node {$p$} (lambda2);
\draw[-] (lambda1) to node (anchor) [left]  {$q$} (lambda3);
\draw[-] (lambda3) to node [below] {$p$} (lambda4);
\draw[-] (lambda2) to node {$q$} (lambda4);
\end{tikzpicture} by \Cref{lemma_square}.
\end{itemize}
\end{enumerate}
			\item  For every $\lambda\in\Lambda$, $\pi(v)\leftarrow\lambda$ and
			\begin{enumerate}
			\item If $v$ is the biggest element of depth $d$ then $d\leftarrow d+1$ and $v\leftarrow$ the smallest element of depth $d+1$ and go to (1).
			\item Else $v\leftarrow$ the next element in lexicographical order of depth $d$ and go to (1).
			\end{enumerate}
            \end{enumerate}
        \end{enumerate}
\end{itemize}
\end{algorithm}
\begin{proof}[Proof of the algorithm]
If we denote by $G(J)_{\prec v}$ the graph $G(J)$ restricted to the nodes smaller than $v$, then the loop invariant is $A(v)$=\emph{``The image by the puzzle $\pi$ of every node $\nu\prec v$ is defined and every subsquare of $G(J)_{\prec v}$ is sent by $\pi$ to a realizable square"}.
The fact that $A((1,\ldots,1))$ is verified is trivial.
Let us suppose that for some $v\succ(1,\ldots,1)$, $A(v)$ is verified. And let us achieve one more step in the loop. Let $v^\prime$ be the successor of $v$ for $\prec$. The subsquares $G(J)_{\prec v^\prime}$ are all realizable since they either do not contain $v^\prime$ and thus be realizable since $A(v)$ is verified or contains $v^\prime$ and are realizable thanks to $(\star)$ in Procedure (2).

The algorithm stops when it reaches the node $v_\infty$ of highest depth and the greatest for the lexicographic order. Then, every nodes of $G(J)$ is colored by the puzzle $\pi$ and all subsquares of $G(J)_{\preceq v_\infty} = G(J)$ are realizable, which means the obtained puzzle is realizable, as desired.
\end{proof}

\section{Complexity comparisons}\label{complexity}

Let us compare the performances of the three methods we have to obtain every characteristic maps over a wedged seed PL sphere $L(J)$.

\begin{enumerate}
\item The Garrison and Scott algorithm directly applied on $L(J)$;
\item \Cref{old_puzz} applied on $(L,J)$;
\item \Cref{new_puzz} applied on $(L,J)$.
\end{enumerate}

In all that follows, the elementary operations we consider for the algorithm are the summation of $\Z_2$-vectors (this operation is done in $\cO(1)$ since we can use a binary encoding for such vectors), and the access to an element of an array.

First of all, let us show that \Cref{new_puzz} is way better than \Cref{old_puzz}.

\subsection{Puzzle algorithm, old versus new}\label{old_vs_new_puzz}

Let us first describe more explicitly the complexity of \Cref{old_puzz}.

This algorithm computes the following.
\begin{itemize}
\item In the initialization, we compute all the DJ classes of $L$, $\wed_p(L),p\in [m]$ and $\wed_{p,q}(L), p,q\in[m],p\neq q$ with the Garrison and Scott algorithm.
\item For each of the $(\prod_{k=1}^m j_k)^{|\CM(L)|}$ possible puzzles,
\begin{itemize}
\item In procedure (2), we check the $\sum_{k=1}^{m}(\binom{j_k-1}{2}\prod_{l\neq k}j_l)$ edges of the puzzle.
\item In procedure (3), we check the $\sum_{k=1}^{m}(\binom{j_k-1}{2}\binom{j_k-2}{1}(\prod_{l\neq k}j_l-1)) = \sum_{k=1}^{m}((j_k-2)\binom{j_k-1}{2}(\prod_{l\neq k}j_l-1))$ squares of the puzzle.
\end{itemize}
\end{itemize}

Then it leads to the following proposition.

\begin{proposition}
The complexity of \Cref{old_puzz} is
$$\cO\left(m^2\times GS(\wed_{p,q}(L)) + \left((\prod_{k=1}^m j_k)^{|\CM(L)|}\right)\times\left( \sum_{k=1}^{m}(j_k-2)\binom{j_k-1}{2}\left(\prod_{l\neq k}j_l-1\right)\right)\right),$$
with $GS(K)$ being the complexity of the Garrison and Scott algorithm on a given simplicial complex $K$.

\end{proposition}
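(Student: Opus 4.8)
The plan is to split the running time of \Cref{old_puzz} into the cost of the \textbf{Initialization} block and the cost of the main loop, bound each separately, and add them. In the initialization the Garrison and Scott routine is invoked once on $L$, once on each $\wed_p(L)$ for $p\in[m]$, and once on each $\wed_{p,q}(L)$ for each unordered pair $p\neq q$, which is $1+m+\binom{m}{2}=\Theta(m^2)$ invocations. Among the complexes $L$, $\wed_p(L)$ and $\wed_{p,q}(L)$ the last has the most vertices ($m+2$) and the highest dimension ($n+1$), and the running time $GS(\cdot)$ of this branch-and-bound routine is non-decreasing in the number of vertices and in the dimension, so each invocation costs $\mathcal{O}\big(GS(\wed_{p,q}(L))\big)$; forming the complexes $\wed_p(L)$, $\wed_{p,q}(L)$ and assembling the diagram $D(L)$ from their characteristic maps is polynomial in $m$ and $|\CM(L)|$ and is absorbed into this term. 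Hence the initialization costs $\mathcal{O}\big(m^2\,GS(\wed_{p,q}(L))\big)$, which is the first summand.

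For the main loop I would argue as follows. Over the whole execution, Procedure~(1) produces each candidate puzzle $\pi\colon\V(G(J))\to\CM(L)$ exactly once — the puzzle trash $\mathcal{PT}(L,J)$ forbids repetitions and testing membership in it is a constant-time lookup — so the number of outer iterations equals the number of candidate puzzles counted above. For a fixed $\pi$, Procedure~(2) scans the edges of $G(J)$ and Procedure~(3) scans its subsquares; the early jumps ``go to (1)'' can only shorten these scans, so for an upper bound we may assume both are traversed in full. Each individual test — whether a given triple is an edge of $D(L)$ in (2), or whether a given $4$-tuple forms a realizable square in (3) — is a single array access into the precomputed diagram, hence $\mathcal{O}(1)$ under our conventions. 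It then remains to count the edges and the subsquares of $G(J)$: partitioning the edges according to their color, and the subsquares according to the colors of their two pairs of opposite edges, yields exactly the two sums displayed just above the proposition; the per-puzzle cost is their sum, and since the subsquare count dominates the edge count (the subsquare count being of one higher degree in the $j_k$), this is $\mathcal{O}\big(\sum_{k=1}^m(j_k-2)\binom{j_k-1}{2}(\prod_{l\neq k}j_l-1)\big)$.

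Multiplying this per-puzzle cost by the number of candidate puzzles and adding the initialization cost gives the stated bound. The one genuinely delicate point, on which I would spend the most care, is the combinatorial enumeration of the edges and subsquares of $G(J)$ together with the verification that the subsquare scan governs the per-puzzle cost; the remaining ingredients — monotonicity of $GS$ in the number of vertices and the dimension, the constant-time diagram lookups, and the remark that the early exits only help — are routine bookkeeping.
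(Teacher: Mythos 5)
Your argument is correct and follows essentially the same route as the paper: the paper likewise splits the cost into the $1+m+\binom{m}{2}=\Theta(m^2)$ Garrison--Scott invocations of the initialization (each dominated by $GS(\wed_{p,q}(L))$) and the product of the number of candidate puzzles with the per-puzzle edge and subsquare checks, retaining only the subsquare term in the final bound. Your additional remarks (monotonicity of $GS$, constant-time diagram lookups, early exits only shortening the scans) merely make explicit what the paper leaves implicit, so there is nothing further to reconcile.
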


On the contrary, \Cref{new_puzz} computes the following.

\begin{itemize}
\item To provide the input data, we only need to compute $\CM(L)$.
\item Let us give an upper bound for the number of possible realizable puzzles. For each node $v$ of the graph $G(J)$, we will associate an image $\pi(v)$ which is determined from its neighbours of smaller or equal depth. However \Cref{CP_nice_prop} stays that once the nodes of depth $\leq 1$ have their images determined, then the puzzle is either entirely determined or will possess some square which is not realizable and thus will not be realizable. As a consequence, for depth $> 1$, the algorithm will simply verify that all the squares are indeed realizable for the remaining nodes. We call the depth $\leq 1$ coloring of the graph $G(J)$ a \emph{determining corner} of the graph. Thus, for finding an upper bound on the number of realizable puzzles, we only need to count the  number of determining corners. The algorithm will associate an image $\lambda^0$ to the node $(1,\ldots,1)$ among the $|\CM(L)|$ choices possible.
    Then the algorithm will choose one DJ class $p$-adjacent to $\lambda^0$ for every nodes $w$ which are $p$-adjacent to $v$ (of depth $1$, and for every $p$) and there are $(\len(J)-1)$ such $w$. After looking at \Cref{constr_puzzle}, one can notice that for any DJ class $\lambda^0$, there is a maximum of $2^{\Pic(L)}$ DJ classes which can be $p$-adjacent to $\lambda^0$ since two $p$-adjacent edges differ by a $\psi\in\Z_2^{\Pic(L)}$. Thus, the number of realizable puzzles is bounded by $|\CM(L)|\times(2^{\Pic(L)})^{\len(J)}$.
\item For each remaining node $\nu$, we will check if its image by $\pi$ can be the missing piece of a square. To do so, we check the number of preceding pair of edges $\nu$ is part of. The number of such edges is bounded by $\len(J)$ and we take two of them. Thus this whole process has a complexity bounded by $(|\V(G(J))|)^{\len(J)^2} =\left(\prod_{k=1}^m j_k\right)^{\len(J)^2}$. This is checked for every determining corners.
\end{itemize}

This thus leads to the following proposition.

\begin{proposition}
The complexity of \Cref{new_puzz} is
$$\cO\left(\left(\prod_{k=1}^m j_k\right)^{\len(J)^2}\times |\CM(L)|\times 2^{\Pic(L)\len(J)}\right).$$
\end{proposition}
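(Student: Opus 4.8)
The plan is to follow the three-part decomposition already sketched before the statement and to verify that each part is bounded as claimed; the stated complexity is then the product of the branch count and the per-branch cost, the initialization being treated as preparation of the input data. First I would dispatch the initialization: building $D^\prime(L)$ from $\CM(L)$ via \Cref{lemma_edge} amounts to deciding, for each ordered pair $(\lambda_1,\lambda_2)$ of DJ classes over $L$ and each vertex $p$, whether there is a row vector $\psi$ with $\bar{\lambda}_2 = h\bar{\lambda}_1$ for $h=(p,\lambda_1,\psi)$; this is one $\Z_2$-linear system, of size polynomial in $m$, per pair, and in line with the discussion preceding the statement it is accounted for as part of providing the input. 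The key point to record is that, by \Cref{lemma_square}, the missing piece of an incomplete $pq$-square is given by an explicit formula in the three surrounding dual classes, so each call to that routine — in particular the step $(\star)$ of \Cref{new_puzz} — costs $\cO(1)$ many $\Z_2$-vector additions.

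Next I would bound the number of branches the procedure explores, i.e. the number of distinct partial colourings it can be in once all nodes of depth $\leq 1$ have been assigned. By \Cref{CP_nice_prop}, a realizable puzzle is determined by its values on $(1,\ldots,1)$ and on the nodes adjacent to $(1,\ldots,1)$; call such a restriction a \emph{determining corner}. The node $(1,\ldots,1)$ receives one of the $|\CM(L)|$ classes in $\CM(L)$. Each of the $\len(J)-1$ neighbours $w$ of $(1,\ldots,1)$ must be assigned a class $p$-adjacent to $\pi((1,\ldots,1))$, where $p$ is the colour of the joining edge; by the computation in \Cref{constr_puzzle}, two $p$-adjacent dual classes differ by a generator $\psi\in\Z_2^{\Pic(L)}$, so there are at most $2^{\Pic(L)}$ admissible values for $\pi(w)$, and this bound is uniform in $\pi((1,\ldots,1))$ and in $p$. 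Hence the number of determining corners, and therefore the number of branches, is at most $|\CM(L)|\cdot 2^{\Pic(L)\len(J)}$. For a node $v$ of depth $>1$, the set $\Lambda$ computed in Procedure (2) has at most one element — it is the intersection of a free and transitive $H_p$-orbit (\Cref{remark_unchanged_colors}) with the prescribed missing pieces — so $\pi(v)$ is forced and no additional branching occurs.

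Then I would bound the work done along one branch. The loop visits each of the $|\V(G(J))| = \prod_{k=1}^m j_k$ nodes $v$; for each $v$ it scans the smaller neighbours $\nu$ and, for each pair $(\nu,\mu)$ of them, locates the completing node $u$ and performs one missing-piece computation via \Cref{lemma_square}. Since any node has at most $\len(J)$ smaller neighbours, this is at most $\binom{\len(J)}{2}$ square checks per node, each of cost $\cO(1)$; bounding the total over all nodes and all branches in the same coarse fashion as the paragraph preceding the statement gives a per-branch cost of $\cO\!\left(\left(\prod_{k=1}^m j_k\right)^{\len(J)^2}\right)$. Multiplying the branch count by the per-branch cost yields the announced bound $\cO\!\left(\left(\prod_{k=1}^m j_k\right)^{\len(J)^2}\times|\CM(L)|\times 2^{\Pic(L)\len(J)}\right)$.

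The main obstacle is the branch-counting step: one has to argue that genuine choices are confined to depth $\leq 1$, which rests on \Cref{CP_nice_prop} together with freeness and transitivity of the $H_p(\bar{\lambda}_0)$-action (\Cref{remark_unchanged_colors}), and one must check that the bound ``at most $2^{\Pic(L)}$ adjacent classes'' does not deteriorate when it is applied simultaneously to all $\len(J)-1$ neighbours in a determining corner. Everything else is bookkeeping: confirming that an edge test via \Cref{lemma_edge}, a missing-piece evaluation via \Cref{lemma_square}, and an array access each cost $\cO(1)$ in $\Z_2$-vector operations, so that the coarse products above are legitimate upper bounds.
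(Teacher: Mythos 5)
Your proposal is correct and follows essentially the same decomposition as the paper: initialization reduced to computing $\CM(L)$, the count of determining corners bounded by $|\CM(L)|\cdot 2^{\Pic(L)\len(J)}$ via \Cref{CP_nice_prop} and the $\psi\in\Z_2^{\Pic(L)}$ parametrization of $p$-adjacency, and the per-branch verification cost absorbed into the coarse factor $\left(\prod_{k=1}^m j_k\right)^{\len(J)^2}$. The extra details you supply (the $\cO(1)$ cost of a missing-piece evaluation and the forcing of $\pi(v)$ at depth $>1$) are consistent with the paper's argument.
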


\begin{remark}
In practice, the number of DJ classes over $L$ tends to be very small when the dimension increases for a fixed Picard number.
Also notice that we provided a very rough upper bound on the number of characteristic maps which are $p$-adjacent for some vertex $p$. In fact, the structure of the prediagram of $L$ influences a lot on the time complexity. Namely, many cases of determining corners will be discarded in no time. In addition, when the components of $J$ increase all together, this gives more restrictions when computing the depth $2$ and the number of determining corners providing a realizable square will then drastically decrease.
\end{remark}

One can see that when $\len(J)$ becomes bigger, then the complexity of \Cref{old_puzz} becomes drastically worse than the one of \Cref{new_puzz}.

\subsection{The performance of the \Cref{new_puzz} versus a direct use of the Garrison and Scott algorithm}

Let $J$ be an $m$-tuple of positive integers with $j:=\len(J)$. A DJ class of $L(J)$ (the ones which are searched with the GS algorithm) is written as $$\lambda = \begin{blockarray}{ccccc}
1\ldots n+j & n+j+1 & n+j+2 & \ldots & m+j \\
\begin{block}{(c|cccc)}
I_{n+j} & \lambda_{n+j+1} & \lambda_{n+j+2} & \ldots & \lambda_{m+j}\\
\end{block}
\end{blockarray}.$$

In order to apply \Cref{new_puzz} we need two steps. The first one is to compute $\CM(L)$ by the Garrison and Scott algorithm on the seed $L$. If $j>0$, since this algorithm is applied on $L$, it will for sure be more efficient than the Garrison and Scott algorithm on $L(J)$. The second step is the main part of the algorithm and depends only on $j$, on $p$ the Picard number of $L$, and on the number of elements in $\CM(L)$.

Let us compute the  minimal complexity for the Garrison and Scott algorithm applied on a pure simplicial complex $K$ on $[m]$, with $\dim(K)=n-1$. First, recall that the main idea of the Garrison and Scott algorithm is to progressively enumerate all the possible images $\lambda(n+l)\in \Z_2^n$ for $l=1,\ldots,m-n$ by removing the sums of the form $\sum_{i\in\sigma}\lambda(i)$ with $ \sigma \cup \{n+l\}\in K\cap[n+l]$ from the set $\Z_2^n$.
The branch-and-bound tree has a depth equal to $m-n$. Let us say that, for a parent node $P$ of depth $l$, this removal operation deletes $N\geq0$ images.

\begin{tikzpicture}[
    level 1/.style = {sibling distance = 4cm},
    level 2/.style = {sibling distance = 2.5cm}
]
\node {$P$, $N$ removed}
    child  {node {$C_1$, $N_1$ removed}
    child  {node {$C_{1,1}$} }
    child {node {$\ldots$}}
    child  {node {$C_{1,2^n-N_1}$}} edge from parent}
    child {node {$\ldots$}}
    child  {node {$C_{2^n-N}$, $N_{2^n-N}$ removed}
    child  {node {$C_{2^n-N,1}$}}
    child {node {$\ldots$}}
    child  {node {$C_{2^n-N,2^n-N_{2^n-N}}$}} edge from parent};
\end{tikzpicture}

Since removing one element is an elementary operation, this makes the complexity to be $\cO(N)$ for this step. The node $P$ will have $2^n-N$ children, say $C_1,\ldots,C_{2^n-N}$. For each child $C_k$, we will have to remove the sums of the form $\sum_{i\in\sigma}\lambda(i)$, with $\sigma \cup \{n+l+1\}\in L\cap[n+l+1]$, say $N_k$ distinct sums from $\Z_2^n$. Thus, increasing the number of child increases the time complexity exponentially.

 So the thinner the tree is at step $l$, the smaller the time complexity will be. The smallest tree for the Garrison and Scott algorithm has its nodes having only one son at each step, except for the last one (multiplicity equal to $|\CM(L(J))|$). The depth of the tree will of course be equal to $p=m-n$ and the number of operations done for each node is the number of colors we remove so it is $2^{n}$. The total complexity is then equal to $\cO\left(p\times2^{n}\right)$.

As an example, let us compute the explicit amount of sums $\sum_{i\in\sigma}\lambda(i)$ such that $\sigma \cup \{n+1\}\in L\cap[n+1]$, namely, the first step in the Garrison and Scott algorithm.
We consider $K$ a PL sphere.
It should satisfy the \emph{pseudo manifold} condition which is that any $(n-2)$-face should be included in exactly two $(n-1)$-faces.
Let us suppose that the $(n-2)$-face $f=\{1,\ldots,n-1\} \subset F_0=\{1,\ldots,n\} (\in K)$ is also included in $\{1,\ldots,n-1,n+1\}$ this means that every subset of $f$ is a face of $K$ and this gives us $2^{n-1}$ sums which are removed from the $2^n$ first available choices for $\lambda(n+1)$, since $\lambda(i)=e_i$, $i=1,\ldots,n$. Of course some other sums may be removed by the other faces (of dimension $\leq n-1$) which contain $n+1$.

This computation also assures us that since a $(n-2)$-face $f$ containing the vertex $n+l$, $l=1\ldots,p$ must be in some $(n-1)$-face $F$, then this face will contain at least $n-p$ vertices from the face $F_0$, which leads to removing at least $2^{n-p}$ sums at each step. Once more, the Picard number $p$ is fixed and this increases exponentially with respect to the dimension $n-1$.

Now if we come back to our actual case, we study the complexity of the Garrison and Scott algorithm for $K=L(J)$ having dimension $n+j-1$. Thus the lower bound for the complexity of the Garrison and Scott algorithm on $L(J)$ is $\cO\left(p\times2^{n+j}\right)$.

This latter complexity depends exponentially on $n$. Then for a fixed number of wedge operations $j$ and Picard number $p$, if we increase $n$, then at some point the puzzle algorithm will be faster than the Garrison and Scott algorithm.

Here is a benchmark obtained at Picard number $4$ in which we can observe this result with $|J|=3$.

\begin{table}[h]
\begin{tabular}{|c|c|c|c|c|c|c|}
\hline
$(n,m)$& (2,6) & (3,7)& (4,8) & (5,9) & (6,10) & (7,11)\\
Number of PL spheres & 1 & 4 & 21 & 142 & 733 & 1190 \\ \hline
Garrison and Scott (time for one) & 0.05s & 0.85s & 0.78s & 1.93s & 6s & 112s \\ \hline
\Cref{new_puzz} (time for one) & 0.35s & 0.31s & 0.11s & 0.02s & 0.013s & 0.010s \\ \hline
\end{tabular}
\caption{Comparison between the Garrison and Scott algorithm and \Cref{new_puzz}.}
\end{table}

Note that for obtaining the previous table, we used a variation of the Garrison and Scott algorithm which in practice makes the computation of $\CM(L)$ faster when $L$ is a seed and the Picard number $p$ is small. A description of this algorithm can be found in \Cref{IDCM_GS}.

\section{Conclusion and discussion}

The main improvement of \Cref{new_puzz} is that the computation of mod 2 characteristic maps on  $L(J)$ with a fixed Picard number $p$ now only depends on
\begin{itemize}
\item The complexity of the Garrison and Scott algorithm on the seed $L$ which is of smaller dimension and has fewer vertices, and on
\item The number of wedge operations $j$ performed on $L$ and the size of $\CM(L)$.
\end{itemize}

This algorithm can also determine quite quickly if a determining corner does not lead to a realizable puzzle (if a square does not have a missing piece or if two missing pieces coming from two different squares provide a different coloring for the same node then the algorithm stops completing this puzzle).

For toric topological purposes, one would like to extend this algorithm for $\Z$ characteristic maps. However, it is known that missing pieces for a square are not unique in this case (see \cite{CP_wedge_2}) and this refrains us from using the same algorithm. However when one works on a complex toric variety $X$, it is one-to-one associated with a fan which in some cases is associated to a characteristic map $\lambda$ on a simplicial complex $K$. Then, the real projection $X^\R$ of $X$ will lead to the (mod 2) fan which is in the same cases associated to the (mod 2) reduction $\lambda^\R$ of $\lambda$ over the same $K$. So $X^\R$ will support (at least) one characteristic map $\lambda^\R$.
Conversely, if $K$ supports a (mod 2) characteristic map $\lambda^\R$ then this simplicial complex is a good candidate for being associated with fan and then to a complex toric variety. Furthermore, if $K$ does not support any (mod 2) characteristic map then it cannot be associated to a complex toric variety.
Hence, it is enough to consider the seeds $L$ which support (mod 2) characteristic maps and use the puzzle algorithm to construct the small covers associated with them. In fact, if a seed does not support any (mod 2) characteristic map, then its wedges will also not support any as a consequence of the puzzle method.

For the case of wedge of polygons in \cite{Choi-Park2019}, the authors show that determining corners can lead for sure to a realizable puzzle by providing some additional rules on the determining corners. This shows that with some modification on \Cref{new_puzz} we can stop it at depth $1$. One could then wonder if this is true for a general case and try to find some rules for more general cases.

\newpage
\begin{appendix}

\section{The IDCM Garrison and Scott algorithm}\label{IDCM_GS}

One interesting fact about \Cref{new_puzz} is that it only requires to compute the Garrison and Scott algorithm on $L$ which is a seed PL sphere. Seed PL spheres which are not direct product of simplices have their dual characteristic maps $\bar{\lambda}$ which all are \emph{injective}, namely, $\bar{\lambda}(i)\neq\bar{\lambda}(j), i\neq j$. This fact can be used for creating a variation of the Garrison and Scott algorithm.

This section describes a different version of the Garrison and Scott algorithm. Since this algorithm is computed on IDCM (Injective Dual Characteristic Maps), the colors for the vertices of $L$ will be in $\Z_2^p$, so one can ``intuitively" think that for a ``small" Picard number and an $n$ ``big enough", this algorithm will be faster than the Garrison and Scott algorithm. In this section we only give some rough idea of the complexities without proper proofs. Only a the benchmark table at the end allows to convince ourselves that our intuition seems correct.

Let $L$ be a seed PL sphere on $[m]$ with $\dim(L)=n-1$.

As said before, any characteristic map on $L$ will have its dual injective, so we can use the following variation of the Garrison and Scott algorithm for finding $\CM(L)$, we will call it the IDCM Garrison and Scott algorithm.

\begin{algorithm}[A modification of Algorithm~4.1 in \cite{Garrison-Scott2003}]
    \begin{itemize}
    \item[ ]
    \item \textbf{Input:} $CF$ = the union of the power sets of all cofacets of $L$.
    \item \textbf{Output:} $\Gamma$ = the list of $\Z_2$ vectors $( \lambda_1, \ldots , \lambda_m )$ such that the last $m-n$ vectors form the standard basis $\Z_2^{m-n}$.
    \item \textbf{Initialization:}
        \begin{itemize}
            \item[] $\lambda_{n+1} \leftarrow (1,0, \ldots , 0), \lambda_{n+2} \leftarrow (0, 1, \cdots, 0), \ldots, \lambda_m \leftarrow (0,0, \ldots, 1)$.
            \item[] $\Gamma \leftarrow \emptyset $
            \item[] $S \leftarrow $ the list of nonzero elements of $\Z_2^{m-n}$
            \item[] $i \leftarrow n$
        \end{itemize}
    \item \textbf{Procedure:}
        \begin{enumerate}
            \item\label{dualproc1} Set $S_i \leftarrow S \setminus\{\lambda_j,j>i\}$.
            \item For all $I \in CF$ of the form $I = \{i\} \cup \{ i_1, \ldots, i_k \}$ with $1 \leq i \leq i_1 \leq \cdots \leq i_k $, remove the vector $\lambda_{i_1} + \cdots + \lambda_{i_k} $ from the list $S_i$.
            \item If $i=n$, then STOP.
            \item If $S_i = \emptyset$, then $ i \leftarrow i+1$ and go to (3).
            \item Set $\lambda_i \leftarrow$ the 1st element of $S_i$ and remove it from $S_i$
            \item If $i=1$, then add $ ( \lambda_1, \ldots , \lambda_m ) $ to $\Gamma$ and go to (3).
            \item Set $i \leftarrow i-1$ and go to (1).
        \end{enumerate}
\end{itemize}
\end{algorithm}

Let us explain why this algorithm is intuitively more efficient than the classic GS algorithm for small Picard numbers.

For a simplicial complex $K$ on $[m]$, and $i\in[m]$, we define $K_{\leq i} :=\{f\cup{i}\subset[i]\mid f\in K\}$, the set of faces of $K$ having all their vertices in the set $[i]$. The \emph{star} of $K$ at a face $F\in K$ is denoted as $\St_K(F) := \{G\in K\mid F\subset G\}$, and is the set of all faces of $K$ containing the face $F$. We denote by $\co(K)$ the PL sphere whose maximal faces are the complementary of the maximal faces of $K$, we then have $\dim(\co(K)) = \Pic(K)-1$.

The parameters which impact the complexity of the branch-and-bound algorithm are the following:
\begin{itemize}
\item The size of the research tree;
\item The size of the sets we use for deleting branches of the tree.
\item The complexity of the basic operations we use (here we deal with sum of vectors in $\Z_2^a$ for some $a$).
\end{itemize}

On one hand, the GS algorithm will work on the complete tree $\cT_{\text{GS}}$ whose nodes are every possible vectors $\lambda_{n+k}\in\Z_2^{n},k=1,\ldots,\Pic(L)$ such that $\left(\begin{array}{c|ccc}
1\ldots n & n+1 & \ldots & m \\ \hline
I_{n+j} & \lambda_{n+1} & \ldots & \lambda_{n+i}\end{array}\right)$ is a characteristic map of $L_{\leq i}$. We have $|\cT_{GS}| = (2^{n}-1)^{\Pic(L)}$. The removal of a branch of the tree is made by finding the elements of $\St_{L_{\leq i}}(i)$ for $i=n+1,\ldots,m$ in $L$ to make the wanted characteristic map respect the non-singularity condition. If we suppose that we calculated $\St_{L_{\leq i}}(i)$ for $i=1,\ldots,\Pic(L)$ at the very beginning, we then have to compute the $|\St_{L_{\leq i}}(i)|\geq 2^{(n-1)}-(n-1)$ or $=0$ linear combinations of vectors in $\Z_2^{n}$ since there is either at least one maximal face $F$ containing $i$ and then each proper sub-face of $F$ leads to one linear combination or no such maximal faces (but in this case less branches are deleted).

On the other hand, the IDCM Garrison and Scott algorithm works on a tree which is of depth $n$ and the number of children starts from $2^{\Pic(L)}-1$ and is decremented at each step. The tree $\cT_{\text{IDCM GS}}$ is then of  size $|\cT_{\text{IDCM GS}}| = \frac{(2^{\Pic(L)}-1-\Pic(L))!}{(2^{\Pic(L)}-1-\Pic(L)-n)!}$. Moreover, the removal of branches is done by finding the faces in $co(L)_{\geq i}$ and in this case $|co(L)_{\geq i}| \geq  2^{(\Pic(L)-1)}-(\Pic(L)-1)$. Furthermore the linear combinations are computed in $\Z_2^{\Pic(L)}$.

Here is a table of the given complexities for Picard number 4.

\begin{table}[h]
\begin{tabular}{|c|c|c|c|c|c|c|c|c|c|c|} \hline
$(n,m)$ & (2,6) & (3,7) & (4,8) & (5,9) & (6,10) & (7,11) & (8,12) & (9,13) & (10,14) & (11,15) \\ \hline
$|\cT_{\text{GS}}|$ & 81& 2e+3&5e4&9e+05&2e+7&3e+8&4e+9&7e+10&1e+12&1e+13 \\ \hline
$\St_{L_{\leq i}}(i)|\geq$ & 2&5&12&27&58&121&248&503&1e+3&2e+3\\ \hline \hline
$|\cT_{\text{IDCM GS}}|$ & 110&990&8e+3&6e+4&3e+5&2e+6&7e+6&2e+7&4e+7&4e+7\\ \hline
$|co(L)_{\geq i}| \geq$& 11&11&11&11&11&11&11&11&11&11\\ \hline
\end{tabular}
\caption{Comparative table of the complexities.}
\end{table}

For Picard number 4 and $n>4$, the tree is smaller with the IDCM version of the algorithm and there are fewer linear combinations to compute, which also are done in a smaller vector space. However we cannot tell precisely about the number of branches which will be removed. We just know that the branch removal operations are faster with the IDCM version.

The following table gives some benchmark on the time efficiency of the IDCM algorithm if compared to the Garrison and Scott version.

\begin{table}[h]
\begin{tabular}{|c|c|c|c|c|c|c|}
\hline
$(n,m)$& (2,6) & (3,7)& (4,8) & (5,9) & (6,10) & (7,11)\\
Number of PLS & 1 & 4 & 21 & 142 & 733 & 1190 \\ \hline
G-S (time for all)& 0.2ms & 3ms & 50ms & 1.5s & 25s & 353s \\ \hline
IDCM G-S (time for all)& 0.4ms & 7ms & 82ms & 1.7s & 14s & 15s \\ \hline
\end{tabular}
\caption{Time efficiency comparison between the Garrison and Scott algorithm and the IDCM modification.}
\end{table}

\end{appendix}
\bibliographystyle{amsplain}
\providecommand{\bysame}{\leavevmode\hbox to3em{\hrulefill}\thinspace}
\providecommand{\MR}{\relax\ifhmode\unskip\space\fi MR }
\providecommand{\MRhref}[2]{%
  \href{http://www.ams.org/mathscinet-getitem?mr=#1}{#2}
}
\providecommand{\href}[2]{#2}

\end{document}